\newtheorem{theorem}{Theorem}[section]
\newtheorem{lemma}[theorem]{Lemma}
\newtheorem{proposition}[theorem]{Proposition}
\newtheorem{corollary}[theorem]{Corollary}
\newtheorem{remark}[theorem]{Remark}
\def \Cm {\mathbb{C}}
\def \Nm {\mathbb{N}}
\def \Rm {\mathbb{R}}
\def \Sm {\mathbb{S}}
\def \Zm {\mathbb{Z}}
\newcommand      {\bone}        {{ \mathbbm 1}}
\def\C{\mathcal{C}}
\def\D{\mathcal{D}}
\def\V{\mathcal{V}}
\def\tu{\tilde{u}}
\newcommand{\where}{\quad\text{ where }}
\newcommand{\qandq}{\quad\text{ and }\quad}
\newcommand{\cout}[1]{}
\newcommand{\x}{\mathrm{x}}
\newcommand{\y}{\mathrm{y}}
\newcommand{\sgn}[1]{\,{\rm sign}(#1)}
\newcommand{\mat}[4]{\left[ \begin{array}{cc} #1 & #2 \\ #3 & #4 \end{array} \right]}
\renewcommand{\Im}{\mathbb{I}\mathrm{m} }
\title{Inversion of the attenuated geodesic X-ray transform over functions and vector fields on simple surfaces}
\author{Fran\c cois Monard\thanks{Department of Mathematics, University of Michigan, Ann Arbor MI, 48109; monard@umich.edu}}
\begin{document}
\maketitle

\begin{abstract}
    We derive explicit reconstruction formulas for the attenuated geodesic X-ray transform over functions and, in the case of non-vanishing attenuation, vector fields, on a class of simple Riemannian surfaces with boundary. These formulas partly rely on new explicit approaches to construct continuous right-inverses for backprojection operators (and, in turn, holomorphic integrating factors), which were previously unavailable in a systematic form. The reconstruction of functions is presented in two ways, the latter one being motivated by numerical considerations and successfully implemented at the end. Constructing the right-inverses mentioned require that certain Fredholm equations, first appearing in \cite{Pestov2004}, be invertible. Whether this last condition reduces the applicability of the overall approach to a strict subset of simple surfaces remains open at present.
\end{abstract}

\section{Introduction}
We consider explicit inversion formulas for the two-dimensional attenuated X-ray (or, equivalently in two dimensions, Radon) transform of a function or vector field. Let $(M,g)$ a non-trapping Riemannian surface-with-boundary with unit circle bundle 
\[ SM = \{(\x,v) \in TM,\ g(v,v) = 1\}, \] 
and let us denote the geodesic flow $\varphi_t:SM \to SM$ by $\varphi_t(\x,v) = (\gamma_{\x,v}(t), \dot\gamma_{\x,v}(t))$, where $\gamma_{\x,v}(t)$ denotes the basepoint and $\dot\gamma_{\x,v}(t)$ is the (unit) velocity vector. For $\x\in \partial M$, let $\nu_\x$ the unit inner normal and define the influx/outflux boundaries $\partial_{\pm} SM = \{(\x,v) \in \partial SM,\ \pm g(\nu_\x,v) > 0\}$. Fix $a$ a smooth enough function on $M$. Then for a function $f\in L^2(SM)$, we define the attenuated (geodesic) X-ray transform of $f$ by 
\begin{align*}
    I_a f (\x,v) := \int_0^{\tau(\x,v)} f(\varphi_t(\x,v)) \exp \left( \int_0^t a(\gamma_{\x,v}(s))\ ds \right) \ dt, \qquad (\x,v)\in \partial_+ SM,
\end{align*}
where $\tau(\x,v)$ denotes the first exit time of the geodesic $\gamma_{\x,v}(t)$ (the non-trapping condition implies that $\tau$ is uniformly bounded above on $SM$). Such a transform can be regarded as the influx trace $I_a f = u|_{\partial_+ SM}$ of the solution $u$ to the transport equation on $SM$ 
\begin{align}
    Xu + au = -f \quad (SM), \qquad u|_{\partial_- SM} = 0, 
    \label{eq:attTrans}
\end{align}
and where $X = \frac{d}{dt} \varphi_t |_{t=0}$ is the geodesic vector field. Cases of interest here are $(i)$ the case where $f$ is a function on $M$ i.e. $f(\x,v) = f(\x)$ with applications to X-ray Tomography in media with variable refractive index, a topic receiving much interest at the moment \cite{Nguyen2014,Manjappa2015}, and $(ii)$ the case where $f$ represents a vector field $F$ on $M$ via the relation $f(\x,v) =  g(F(\x), v)$, and whose applications to Doppler tomography justify its nickname of {\em Doppler transform}. 

Such a transform over functions was studied earlier in the Euclidean setting. Inversion methods with known attenuation were obtained independently by Arbuzov, Bukhgeim and Kazantzev using $A$-analytic function theory {\it \`a la} Bukhgeim in 1998 \cite{Arbuzov1998}, and by Novikov via complexification methods in 2002 \cite{Novikov2002,Novikov2002a}, see \cite{Finch2004} for a joint study of both approaches. While both methods present two interesting ways of looking at this problem, the latter is of lesser complexity (i.e., comparable to that of an inverse Radon transform) and is therefore more amenable to implementation, as illustrated in \cite{Natterer2001a,Guillement2002}. This latter method was extended to partial data and more general integrands in \cite{Bal2004a}, to the attenuated Radon transform in hyperbolic geometry and to the horocyclic transform in \cite{Bal2005}, to even more general curves in \cite{Hoell2010}, and more recently, to weighted Radon transforms in the Euclidean plane, where the weight has finite harmonic content in \cite{Novikov2014}, with an implementation in \cite{Guillement2014}. Both methods were also adapted to the study of the attenuated transform over functions and vector fields in {\em fan-beam geometry} in \cite{Kazantsev2007}, describing both a solution via $A$-analytic functions and an approach using fiberwise holomorphic solutions to the transport equation \eqref{eq:attTrans} in the Euclidean setting. This last approach was then generalized by Salo and Uhlmann to simple Riemannian surfaces (i.e. surfaces with strictly convex boundary and no conjugate points) in \cite{Salo2011}, where a method reconstructing functions from knowledge of their geodesic X-Ray transform was developed. The Doppler transform was studied microlocally in the Riemannian case in \cite{Holman2009}, and a range characterization in the Euclidean case was recently given in \cite{Sadiq2014}. Additional range characterizations of the Euclidean transform in convex domains of $\Rm^2$ were provided in \cite{Sadiq2013}, and a study of the attenuated Euclidean transform over second-order tensors was recently provided in \cite{Sadiq2015}. 

The X-ray transform can also be considered over vector-valued unknowns defined on $SM$ (i.e. sections of bundles), with matrix-valued attenuations, connections and Higgs fields, for which recent results can be found in \cite{Paternain2013a,Paternain2012,Guillarmou2015}. More general settings include the case of {\em weighted} X-ray transforms (of which the attenuated case is a particular example). In this setting, early Euclidean implementations have been provided in \cite{Kunyansky1992}, and analytic microlocal analysis has led in \cite{Frigyik2008} to injectivity and stability of the attenuated transform over functions when both the metric and the attenuation coefficient are real-analytic. In dimension $d\ge 3$, local injectivity of weighted X-ray transforms near convex boundary points was recently established in \cite{Zhou2013}, following methods in \cite{Uhlmanna} where the unattenuated case was first treated. 
\smallskip

While laying the groundwork of inversions on surfaces with non-constant curvature in \cite{Salo2011} by proving injectivity and providing the first reconstruction procedure, the authors there pointed out two open questions: $(i)$ how to explicitely invert the unattenuated X-ray transform over functions and solenoidal vector fields (we call them $I_0$ and $I_\perp$ here), and $(ii)$ how to explicitely construct {\em (fiber-)holomorphic integrating factors} when curvature is not constant. Here by {\em holomorphic integrating factor} (HIF) for the attenuation $a$, we mean a function which is holomorphic on the fibers (see Sec. \ref{sec:geometrySM}), and which turns the attenuated transport equation \eqref{eq:attTrans} into an unattenuated one. As will be seen below, after performing Fourier analysis on the fibers of $SM$, a transport equation such as \eqref{eq:attTrans} takes the form of a tridiagonal, doubly infinite system of partial differential equations on $M$, which is best understood when it can be made one-sided. As fiber-holomorphic functions have one-sided harmonic content, HIFs allow to fulfill this requirement to a certain extent. In all past aproaches analyzing attenuated transforms, a ``holomorphization'' step of integrating factors always occurs, be it on the fibers of $SM$ when solving a transport equation \cite{Kazantsev2007,Salo2011}, in terms of a complexified parameter when formulating a Riemann-Hilbert problem \cite{Novikov2002,Bal2005,Bal2004a}, or when considering sequence-valued systems \cite{Arbuzov1998,Sadiq2015,Sadiq2014,Sadiq2013}.

In an attempt to address the open questions mentioned above, an answer to $(i)$ was presented by the author in \cite{Monard2013}, by implementing the reconstruction formulas proposed in \cite{Pestov2004,Krishnan2010,Pestov2003}. A first feature of the present article is to address point $(ii)$ by providing explicit ways of constructing holomorphic integrating factors, obtained by first constructing preimages of the adjoint operators $I_0^\star$ and $I_\perp^\star$ explicitely. Second, we derive other reconstruction algorithms, some of which are similar in spirit to those in \cite{Kazantsev2007}, another one similar to the approach in \cite{Pestov2004,Monard2013a}. In the first approach, the main novelty here, partly motivated by the approach in \cite{Kazantsev2007}, consists in decomposing $v = u e^{-w}$ (with $u$ the solution of \eqref{eq:attTrans} and $w$ a holomorphic solution of $Xw = -a$) into the sum of a holomorphic function and a function that is constant along geodesics. This can be done as soon as one can construct explicit preimages of the operators $I_0^\star$ and $I_\perp^\star$. 
A second approach reconstructing functions, more similar to approaches in \cite{Pestov2004,Monard2013a}, is then presented, and a numerical implementation is provided. Additionally, these reconstruction formulas are fast in that no full three-dimensional transport equation needs to be solved unlike \cite{Salo2011}. The class of surfaces where the current approach is valid is that of simple surfaces where some Fredholm equations (see Equations \eqref{eq:FredW} and \eqref{eq:FredWstar} below), which first appeared in \cite[Theorem 5.4]{Pestov2004}, are invertible. It remains open at present whether this latter additional requirement applies to all or a strict subset of simple surfaces, though past numerical experiments done in \cite{Monard2013} by the author have showed that such Fredholm equations were invertible on some family of surfaces which could become arbitrarily close to non-simple. 

Recent work by the author with P. Stefanov and G. Uhlmann \cite{Monard2013b} shows that stable inversion of the attenuated ray transform should still be possible in some surfaces where conjugate points occur at most in pairs. Adapting the current approach to this latter setting will be the object of future work.

\paragraph{Outline.} The structure of the paper is as follows. We first introduce in Sec. \ref{sec:derivations} the basic setting (Sec. \ref{sec:geometrySM}), some new notation and operators (e.g., $I_\perp$) which play an important role in the inversion process (Sec. \ref{sec:newnotation}), the construction of explicit, continuous right-inverses for $I_0^\star$ and $I_\perp^\star$ (Sec. \ref{sec:I0star}) and how they allow the explicit construction of holomorphic integrating factors (Sec. \ref{sec:holosol}). Section \ref{sec:functions} presents the reconstruction formulas for functions and vector fields, following initial ideas in \cite{Kazantsev2007}, first adapted in \cite{Salo2011}. Section \ref{sec:functions2} proposes an alternative approach to reconstruction of functions, stating in passing $L^2\to L^2$ continuity and bounds for a certain family of operators, the proof of which is relegated to appendix \ref{sec:familyops}. Section \ref{sec:numerics} presents a numerical implementation of inversions as set up in Theorem \ref{thm:iterations}.

\section{Notation, preliminaries and the unattenuated case} \label{sec:derivations}

\subsection{Geometry of the unit circle bundle}\label{sec:geometrySM} 

We briefly recall the geometry and notation associated with the unit circle bundle. With $(M,g)$ as above, the geodesic flow $\varphi_t:SM\to SM$ is defined on the domain 
\begin{align}
    \D = \{(\x,v,t): (\x,v)\in SM, \quad t\in (-\tau(\x, -v), \tau(\x,v))\}.
    \label{eq:D}
\end{align}
with $X$ the geodesic vector field as in the introduction, one may construct a global frame $\{X,X_\perp,V\}$ of $T(SM)$, which encodes the geometry of $M$ via the structure equations
\begin{align*}
    [X,V] = X_\perp, \qquad [X_\perp, V] = X, \qquad [X,X_\perp] = - \kappa V \qquad (\kappa:\text{Gaussian curvature}).
\end{align*}
$V$ is sometimes referred to as the {\em vertical derivative} and $X_\perp$ the {\em horizontal derivative}. For $\x\in M$, we also denote $S_\x := \{v\in T_\x M, \quad g(v,v) = 1\}$ the unit tangent circle at $\x$.

Though the proofs will be coordinate-free, a convenient set of coordinates is that of isothermal coordinates (they can be made global as the simplicity assumption on $M$ implies that it is simply connected), for which the metric $g$ is scalar of the form $g = e^{2\lambda(x,y)}(dx^2 + dy^2)$, and where $(\x,v)\in SM$ is parameterized by $(x,y,\theta)$ where $\x = (x,y)$ and $v = e^{-\lambda(x,y)} (\cos\theta \partial_x + \sin\theta \partial_y)$ for $\theta\in \Sm^1$. In these coordinates, the frame $\{X,X_\perp,V\}$ takes the expression 
\begin{align*}
    X &= e^{-\lambda} (\cos\theta \partial_x + \sin\theta \partial_y + (-\sin\theta \partial_x \lambda + \cos\theta \partial_y \lambda)\ \partial_\theta ),\qquad V = \partial_\theta, \\
    X_\perp &= - e^{-\lambda} (-\sin\theta \partial_x + \cos\theta \partial_y - (\cos\theta \partial_x \lambda + \sin\theta \partial_y \lambda)\ \partial_\theta ).
\end{align*}
We use the Sasaki metric on $SM$ for which the basis $(X,X_\perp,V)$ is orthonormal, with volume form $d\Sigma^3$ preserved by the frame (in isothermal coordinates, $d\Sigma^3 = e^{2\lambda}\ dx\ dy\ d\theta$). Introducing the inner product
\begin{align*}
    (u,v) = \int_{SM} u\bar v\ d\Sigma^3, \qquad u,v:SM\to \Cm,
\end{align*}
the space $L^2(SM,\Cm)$ decomposes orthogonally as a direct sum
\begin{align}
    L^2(SM, \Cm) = \bigoplus_{k\in \Zm} H_k, \qquad\where\quad H_k := \ker (V - ikId). 
    \label{eq:L2decomp}
\end{align}
We also denote $\Omega_k := \C^\infty(SM) \cap H_k$, and denote the corresponding decomposition $u(\x,v) = \sum_{k\in \Zm} u_k(\x,v)$ with $u_k\in H_k$. In isothermal coordinate, this corresponds to Fourier series expansions
\begin{align*}
    u(\x,\theta) = \sum_{k=-\infty}^{\infty} u_k(\x,\theta), \where \qquad u_k(\x,\theta) = e^{ik\theta} \tu_k (\x), \qquad \tu_k(\x) = \frac{1}{2\pi} \int_{\Sm^1} u(\x,\theta) e^{-ik\theta}\ d\theta.
\end{align*}
In particular, we denote either by $u_0$ or $\pi_0 u$ the fiberwise average $u_0 (\x) = \frac{1}{2\pi} \int_{S_\x} u(\x,v)\ dS(v)$, so $\pi_0:L^2(SM)\to L^2(M)$ is the projection onto $H_0$. Such functions admit an even/odd decomposition w.r.t. to the involution $v\mapsto -v$ (or $\theta\mapsto \theta+\pi$), denoted
\begin{align}
    u = u_+ + u_-, \where\quad  u_+ := \sum_{k \text{ even}} u_k \qandq u_- := \sum_{k \text{ odd}} u_k.
    \label{eq:oddeven}
\end{align}
An important decomposition of $X$ and $X_\perp$ due to Guillemin and Kazhdan (see \cite{Guillemin1980}) is given by defining $\eta_{\pm}:= \frac{X\pm iX_\perp}{2}$, so that one has the following decomposition 
\[ X = \eta_+ + \eta_- \qandq X_\perp = \frac{1}{i} (\eta_+ - \eta_-), \]
with the important property that $\eta_{\pm}:\Omega_k\to \Omega_{k\pm 1}$ for any $k\in \Zm$, so that both $X$ and $X_\perp$ map odd functions on $SM$ into even ones and vice-versa. 

In the harmonic decomposition above, a diagonal operator of particular interest is the so-called {\em fiberwise Hilbert transform} $H:\C^\infty(SM)\to \C^\infty(SM)$, whose action on each component is described by 
\begin{align*}
    Hu_k := -i \sgn{k} u_k, \quad k\in \Zm, \quad \text{with the convention }\quad \sgn{0} = 0,
\end{align*}
and we denote $H_{+/-}$ the composition of $H$ with projection onto even/odd Fourier modes. We say that a function $u\in L^2(SM)$ is {\em (fiber-)holomorphic} if $(Id + iH)u = u_0$, i.e. if $u$ has only nonnegative Fourier components. An important identity first proved in \cite{Pestov2005} is the commutator between the Hilbert transform and the geodesic vector field: 
\begin{align*}
    [H,X] = \pi_0 X_\perp + X_\perp \pi_0, \qquad [H,X_\perp] = -\pi_0 X - X \pi_0. 
\end{align*}
Note also that $H^2 = -Id + \pi_0$ and that $H\pi_0 = \pi_0 H = 0$. Using these observations and the commutators above, we write
\begin{align*}
    [H^2,X] = H[H,X] + [H,X]H = \cancel{H \pi_0 X_\perp} + H X_\perp \pi_0 + \cancel{X_\perp \pi_0 H} + \pi_0 X_\perp H
\end{align*}
On the other hand, $[H^2,X] = [-Id + \pi_0,X] = \pi_0 X - X \pi_0$. Upon splitting into odd and even parts, we arrive at the following equalities, to be used subsequently:
\begin{align}
    \pi_0 X = \pi_0 X_\perp H \qandq X\pi_0 = - HX_\perp \pi_0. 
    \label{eq:ident}
\end{align}

\subsection{An alternative notation for the unattenuated case}\label{sec:newnotation}

We now introduce some notation that emphasizes the $L^2(M)$-duality arising in the Pestov-Uhlmann reconstruction formulas \cite{Pestov2004}. The main novelty below is the introduction of $I_\perp$. The general unattenuated transform can be defined over functions $f\in L^2(SM)$ as 
\begin{align}
    I f(\x,v) = \int_0^{\tau(\x,v)} f(\varphi_t(\x,v))\ dt, \qquad (\x,v)\in \partial_+ SM. 
    \label{eq:gxrt}
\end{align}
Considering integrands of the form $f\in L^2(M)$ (i.e. $f(\x,v) = f(\x)$), and $X_\perp h$ for some $h\in H_0^1(M)$, let us define the unattenuated transforms
\begin{align*}
    I_0 f := If, \qquad I_\perp h := I (X_\perp h),
\end{align*}
(that is, in the definition of $I_0$, we identify $f$ with its pullback by the canonical projection $SM\to M$). These transforms are continuous in the following settings when $(M,g)$ is non-trapping
\[ I_0:L^2(M)\to L^2_\mu (\partial_+ SM), \qquad I_\perp:H_0^1(M) \to L^2_\mu (\partial_+ SM), \] 
where $L^2_\mu$ is a weighted $L^2$ space with weight $\mu(\x,v) = g(\nu_\x,v)$. 
Define the {\em scattering relation} $\alpha:\partial SM \to \partial SM$ as follows: if $(\x,v)\in \partial_+ SM$, $\alpha(\x,v) = \varphi_{\tau(\x,v)}(\x,v) \in \partial_- SM$, and if $(\x,v)\in \partial_- SM$, $\alpha(\x,v) = \varphi_{-\tau(\x,-v)}(\x,v) \in \partial_+SM$. Recall the definitions of $A_{\pm}: L^2_{\mu}(\partial_+ SM) \to L^2_{|\mu|}(\partial SM)$ and their adjoints $A_{\pm}^\star$, introduced in \cite{Pestov2004}:
\begin{align*}
    A_{\pm} w(x,\theta) := \left\{
    \begin{array}{c}
	w(x,v) \quad \text{on } \partial_+ SM, \\
	\pm w\circ\alpha (x,v) \quad \text{on } \partial_- SM,
    \end{array}
    \right.  \qquad A_{\pm}^\star u := (u \pm u\circ\alpha)|_{\partial_+ SM}.  
\end{align*}
The fundamental theorem of calculus along a geodesic reads
\begin{align}
    IXu = (u\circ\alpha - u)|_{\partial_+ SM} = - A_-^\star u.
    \label{eq:FTC}
\end{align}
With $\psi: SM \to \partial_- SM$ the endpoint map $\psi(\x,v) = \varphi_{\tau(\x,v)}(\x,v)$ and $h\in L^2_\mu (\partial_+ SM)$, we denote by $h_\psi := h\circ \alpha\circ \psi$ the function extended by free geodesic transport to $SM$, i.e. solution of the equation
\[ X u = 0 \quad (SM), \qquad u|_{\partial_+ SM} = h. \]
Straightforward computations using Santal\'o's formula yield that for $h\in L^2_{\mu} (\partial_+ SM)$, 
\begin{align}
    I_0^\star h = 2\pi (h_\psi)_0, \qquad I_\perp^\star h = - 2\pi (X_\perp h_\psi)_0. 
    \label{eq:adjoints}
\end{align}

\paragraph{The $\V_\pm$ decomposition.} Let us define the {\em antipodal scattering relation} $\alpha_1 : \partial_+ SM\to \partial_+ SM$ as $(v\mapsto -v)\circ \alpha$, and write $L_{\mu}^2(\partial_+ SM)$ as the direct sum $\V_+ \oplus \V_-$, where $h\in \V_+$ (resp. $\V_-$) iff $h$ is even (resp. odd) with respect to the involution $\alpha_1$. Since a function of $\x$ only can be regarded as an even function of $v$ on $SM$, and a vector field can be regarded as an odd function of $v$ on $SM$, it is straighforward to establish that 
\[ \text{Range } I_0\subset \V_+ \qquad \text{and} \quad \text{Range } I_\perp\subset \V_-. \] 
Moreover, we have the following lemma.

\begin{lemma} \label{lem:Vpmdecomp}
    The direct sum $L^2_\mu (\partial_+ SM) = \V_+ \oplus \V_-$ is orthogonal.
\end{lemma}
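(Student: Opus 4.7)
The plan is to reduce the decomposition and the orthogonality to the single fact that the involution $\alpha_1$ preserves the measure $d\mu := \mu\, d\Sigma^2$ on $\partial_+ SM$.

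First, I would verify that $\alpha_1$ is genuinely an involution on $\partial_+ SM$. If $(\x,v) \in \partial_+ SM$ and $\alpha(\x,v) = (\x',v') \in \partial_- SM$, then $-v' \in S_{\x'}$ is inward-pointing, so $\alpha_1(\x,v) = (\x', -v') \in \partial_+ SM$. Applying $\alpha_1$ again, reversibility of the geodesic flow sends the geodesic from $(\x',-v')$ back to $(\x,-v) \in \partial_- SM$, and negating the velocity returns $(\x,v)$. Once this is in hand, the algebraic direct-sum decomposition $L^2_\mu(\partial_+ SM) = \V_+ \oplus \V_-$ is immediate via the standard averaging $h = \frac{1}{2}(h + h\circ\alpha_1) + \frac{1}{2}(h - h\circ\alpha_1)$.

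Next, I would argue that $\alpha_1$ preserves the measure $d\mu$. This is where Santal\'o's formula enters: the scattering relation $\alpha:\partial SM \to \partial SM$ preserves $|\mu|\, d\Sigma^2$ (a consequence of the geodesic flow preserving the Liouville form and the boundary transversality), and the antipodal map $v \mapsto -v$ is an isometry of $\partial SM$ that swaps $\partial_\pm SM$ while preserving $|\mu|\, d\Sigma^2$ since $|g(\nu_\x,v)| = |g(\nu_\x,-v)|$. Composing these two measure-preserving maps gives $\alpha_1^*(\mu\, d\Sigma^2) = \mu\, d\Sigma^2$ on $\partial_+ SM$ (where $|\mu| = \mu$).

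With measure invariance established, orthogonality is a one-line change of variables. For $h_+ \in \V_+$ and $h_- \in \V_-$,
\begin{align*}
    \int_{\partial_+ SM} h_+ \overline{h_-}\, d\mu = \int_{\partial_+ SM} (h_+\circ\alpha_1)\overline{(h_-\circ\alpha_1)}\, d\mu = -\int_{\partial_+ SM} h_+ \overline{h_-}\, d\mu,
\end{align*}
forcing the inner product to vanish.

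The main obstacle is step two, the measure invariance of $\alpha_1$. Everything else is formal. I expect this to be either invoked from \cite{Pestov2004} or derived from Santal\'o's formula in a line or two; in either case one must be careful that the weight $\mu$ (not $|\mu|$) appears on $\partial_+ SM$, which is why the extra sign flip $v\mapsto -v$ is combined with $\alpha$ in the definition of $\alpha_1$.
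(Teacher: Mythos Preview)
Your argument is correct and in fact more direct than the paper's. You work entirely on $\partial_+ SM$: once $\alpha_1$ is a $\mu\,d\Sigma^2$-preserving involution (which you correctly identify as the only nontrivial point, and which follows from the Liouville-measure invariance of the geodesic flow together with the obvious parity of $\mu$ under $v\mapsto -v$), orthogonality is the immediate change-of-variables computation you wrote.

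The paper takes a different route: it builds an isometry $h \mapsto (h/\sqrt{\tau})_\psi$ from $L^2_\mu(\partial_+ SM)$ into $L^2(SM)$ via Santal\'o's formula, observes that this isometry sends $\V_\pm$ into the even/odd subspaces of $L^2(SM)$ with respect to $v\mapsto -v$, and then inherits orthogonality from the trivial orthogonality of even and odd functions on $SM$. This has the conceptual payoff of tying the boundary $\V_\pm$ decomposition to the interior even/odd decomposition used throughout the paper, at the cost of introducing the auxiliary weight $1/\sqrt{\tau}$ and the extension $h_\psi$. Your approach avoids all of that and is the cleaner proof of the bare lemma; the paper's approach is perhaps more in keeping with the surrounding machinery. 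Both rest on the same underlying fact (Santal\'o / Liouville invariance).
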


\begin{proof} For $h\in L^2_\mu (\partial_+ SM)$, define $u = \left(\frac{h}{\sqrt{\tau}}\right)_\psi$ where $\tau = I_0 (\bone)$ ($\bone$ denotes the constant function equal to 1 on $M$). Santal\'o's formula allows to show that the map $h\mapsto u$ is $L^2_\mu (\partial_+ SM)\to L^2(SM)$ continuous and $\|u\|_{L^2(SM)} = \|h\|_{L^2_\mu(\partial_+ SM)}$. Moreover, $u$ is even/odd in $v$ whenever $h\in \V_+/\V_-$, so that, if $h\in \V_+$ and $g\in \V_-$, and upon calling $u = \left( \frac{h}{\sqrt{\tau}} \right)_\psi$ and $v = \left( \frac{g}{\sqrt{\tau}} \right)_\psi$, we have 
    \begin{align*}
	(h,g)_{L_\mu^2(\partial_+ SM)} = (u,v)_{L^2(SM)} = 0,
    \end{align*}
    hence the proof.
\end{proof}

\paragraph{Inversion of $I_0$ and $I_\perp$.} We now revisit the inversion of the operators $I_0$ and $I_\perp$, previously established in \cite{Pestov2004}, adapted here to the present notation. Recall the notation $u^f$ ($f\in L^2(SM)$) for the solution of a transport problem of the form 
\begin{align}
    Xu = -f \quad (SM), \qquad u|_{\partial_- SM} = 0,
    \label{eq:transu}
\end{align}
and for $f\in \C^\infty(M)$, define $Wf = (X_\perp u^f)_0$. It is established in \cite{Pestov2004} that $W$ extends as a smoothing (hence compact) operator $W:L^2(M)\to \C^\infty(M)$ and that the $L^2$-adjoint of $W$ is given by $W^\star h = (u^{X_\perp h})_0$.

\begin{proposition} Let $(M,g)$ a simple surface with boundary. Then we have for every $f\in L^2(M)$ and every $h\in H_0^1(M)$,
    \begin{align}
	f + W^2 f &= \frac{1}{2\pi} I^\star_\perp w, \qquad w = \frac{1}{4} A_+^\star H_- A_- I_0 f, \label{eq:FredW} \\
	h + (W^\star)^2 h &= - \frac{1}{2\pi} I_0^\star w, \qquad w = \frac{1}{4} A_+^\star H_+ A_- I_\perp h.  \label{eq:FredWstar}
    \end{align}    
\end{proposition}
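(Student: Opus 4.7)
My plan is to prove both identities by the same scheme: use the fiberwise parity decomposition $u = u_+ + u_-$ (with respect to $v\mapsto -v$) of the transport solution, the commutator identities \eqref{eq:ident}, and the orthogonal decomposition $L^2_\mu(\partial_+ SM) = \V_+\oplus \V_-$ from Lemma \ref{lem:Vpmdecomp}. I describe the strategy for \eqref{eq:FredW} in detail; \eqref{eq:FredWstar} follows by swapping the roles of even and odd throughout.

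For \eqref{eq:FredW}, set $u := u^f$ and $\phi := I_0 f$, and split $u = u_+ + u_-$. Since $f \in H_0$ is fiber-even, one has $Xu_+ = 0$ and $Xu_- = -f$; and since $I_0 f \in \V_+$ (i.e.\ $\alpha_1$-invariant), an elementary boundary computation gives $u_-|_{\partial SM} = \tfrac{1}{2}A_-\phi$ (fiber-odd) and $u_+|_{\partial SM} = \tfrac{1}{2}A_+\phi$ (fiber-even); in particular $HA_-\phi = H_-A_-\phi$. Combining the commutator $[X,H]=-\pi_0 X_\perp - X_\perp \pi_0$ with $Hf=0$, $\pi_0 u_-=0$, and $\pi_0 X_\perp u_- = \pi_0 X_\perp u = Wf$ yields the clean identity $X(Hu_-) = -Wf$. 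Matching $\partial_- SM$ traces against $u^{Wf}$ produces the decomposition
\[
Hu_- = u^{Wf} + k_\psi, \qquad k := \tfrac{1}{2}(H_- A_-\phi)\circ\alpha\big|_{\partial_+ SM}.
\]
Applying the master identity $\pi_0 X = \pi_0 X_\perp H$ to $u$ and observing that $\pi_0 X_\perp Hu_+ = 0$ by parity gives $-f = \pi_0 X_\perp Hu_- = W^2 f + \pi_0 X_\perp k_\psi$, which via \eqref{eq:adjoints} rearranges to $f + W^2 f = \tfrac{1}{2\pi} I_\perp^\star k$.

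The main obstacle is then replacing $k$ by the explicit $w = \tfrac{1}{4} A_+^\star H_- A_-\phi$. A short algebraic manipulation gives $w - k = \tfrac{1}{4} A_-^\star H_- A_-\phi$, so by Lemma \ref{lem:Vpmdecomp} and $\mathrm{Range}(I_\perp) \subset \V_-$ it suffices to show $A_-^\star H_- A_-\phi \in \V_+$. I would verify this by setting $\rho := H_- A_-\phi$ (fiber-odd on $\partial SM$, inherited from $A_-\phi$) and computing how $\alpha_1$ acts on $(\rho - \rho\circ\alpha)|_{\partial_+ SM}$: the geodesic-reversibility identity $\alpha(\y,-w') = (\x,-v)$ whenever $\alpha(\x,v) = (\y,w')$ (for $(\x,v)\in\partial_+ SM$), combined with the fiber-oddness of $\rho$, produces two sign flips that cancel and establish $\alpha_1$-invariance. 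This symmetry check is elementary but is the key reason the backprojection formula closes.

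Formula \eqref{eq:FredWstar} is proved by the same scheme applied to $\tilde u := u^{X_\perp h}$: the source-carrying component is now the fiber-even $\tilde u_+$; the boundary trace $\tilde u_+|_{\partial SM} = \tfrac{1}{2}A_-(I_\perp h)$ is fiber-even (since $I_\perp h \in \V_-$); the identity $-HX_\perp h = Xh$ (the second equality of \eqref{eq:ident}) together with $\pi_0 H = 0$ replaces the role played by $\pi_0 X = \pi_0 X_\perp H$ in the first case; and the hypothesis $h \in H_0^1(M)$ ensures $h|_{\partial M}=0$, so that pulling $h$ back to $\partial SM$ introduces no spurious boundary term. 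The final parity check becomes $A_-^\star H_+ A_- I_\perp h \in \V_-$ (the even fiberwise parity of $A_- I_\perp h$ flips the sign in the $\alpha_1$ computation), and is killed by $I_0^\star$.
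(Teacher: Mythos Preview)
Your proof is correct and follows essentially the same approach as the paper: write the transport problem for $Hu^f_-$ (resp.\ $H u^{X_\perp h}_+ - h$) using the commutator $[H,X]$, identify the resulting first integral $k_\psi$ from its $\partial_- SM$ trace, apply $\pi_0 X_\perp$ (resp.\ $\pi_0$), and finally simplify the boundary term by checking that $A_-^\star H_\mp A_-$ of the data lies in $\V_\pm$ and is killed by the relevant adjoint. Your treatment is slightly more explicit than the paper's on the boundary/parity bookkeeping (e.g.\ the identification $u_-|_{\partial SM} = \tfrac12 A_-\phi$ from $\phi\in\V_+$, and the $\alpha_1$-invariance check via geodesic reversibility), but the argument is the same.
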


\begin{remark}
    Formulas \eqref{eq:FredW} and \eqref{eq:FredWstar} differ slightly from \cite[Theorem 5.4]{Pestov2004} because it is stated there that the solution to the transport problem 
    \begin{align*}
	Xu = -f, \qquad u|_{\partial_+ SM} = w,
    \end{align*}
    is $u^f + w_\psi$, which is what the Fredholm equation there is based upon. The correct answer would be $u^f + (w-I_0f)_\psi$, which in turn yields the modified formula \eqref{eq:FredW}. 
\end{remark}

\begin{proof} {\bf Proof of \eqref{eq:FredW}.} 
    Let $f\in \C^\infty (M)$ and define $u^f$ as in \eqref{eq:transu} so that $u|_{\partial_+ SM} = I_0 f$. Applying $\pi_0 X = \pi_0 X_\perp H$ (derived in \eqref{eq:ident}) to \eqref{eq:transu}, we obtain 
    \[ f = \pi_0 f = -\pi_0 Xu^f = -\pi_0 X_\perp Hu^f = -\pi_0 X_\perp Hu_-^f = - (X_\perp Hu_-^f)_0, \]
    so $f$ can be obtained from the last equation if we can relate $Hu_-^f$ to the known data $I_0 f$. In order to do so, we use the commutator $[H,X]$ to write a transport equation for $Hu_-^f$
    \begin{align*}
	Xu^f_- = -f \quad\Rightarrow\quad X(Hu_-^f) = - \cancel{X_\perp (u_-^f)_0} - (X_\perp u_-^f)_0 = - Wf,
    \end{align*} 
    so that $Hu_-^f$ satisfies the transport problem 
    \begin{align*}
	X(Hu_-^f) = -Wf, \qquad Hu_-^f|_{\partial_- SM} = \frac{1}{2} H_- A_- I_0 f |_{\partial_- SM} := \eta,
    \end{align*}
    which means that 
    \begin{align*}
	Hu_-^f = u^{Wf} + w_\psi, \quad w = \eta\circ\alpha.
    \end{align*}
    Upon applying $(X_\perp\cdot)_0$, we obtain
    \begin{align*}
	(X_\perp Hu_-^f)_0 = W^2 f - \frac{1}{2\pi} I_\perp^\star w. 
    \end{align*}
    Since we have established that $f = - (X_\perp H u_-^f)_0$, we conclude that 
    \begin{align*}
	f + W^2 f = \frac{1}{2\pi} I^\star_\perp w, \quad w = \left( \frac{1}{2} (H_- A_- I_0 f) |_{\partial_- SM} \right) \circ \alpha.
    \end{align*} 
    In terms of $A_\pm^\star$ operators, we can also write $w$ as $w = \frac{1}{2} \frac{A_+^\star-A_-^\star}{2} H_- A_- I_0 f$.
    Moreover, it can be seen that the function $A_-^\star H_-A_- I_0 f$ has $\V_+$ symmetry, so it is annihilated by $I_\perp^\star$. Thus, Equation \eqref{eq:FredW} follows, extended to every $f\in L^2(M)$ by density of $\C^\infty(M)$ in $L^2(M)$. \smallskip

    {\bf Proof of \eqref{eq:FredWstar}. } Let $h\in \C^\infty(M)$ with $h|_{\partial M} = 0$, and let $u^{X_\perp h}$ solve the transport equation 
    \[ X u = -X_\perp h \quad (SM), \qquad u|_{\partial_- SM} = 0.   \]
    Upon projecting onto odd functions of $v$, we have $X u^{X_\perp h}_+ = - X_\perp h$. Direct manipulations and the use of the commutator formula imply
    \begin{align*}
	X u_+^{X_\perp h} = - X_\perp h \quad \implies\quad X(Hu_+^{X_\perp h} - h) = -X_\perp W^\star h - \cancel{(X_\perp u_+^{X_\perp h})_0} = - X_\perp W^\star h. 
    \end{align*}
    Moreover, the trace of $u_+^{X_\perp h}$ is given by $u_+^{X_\perp h}|_{\partial SM} = \frac{1}{2} A_- I_\perp h$. Since the function $Hu_+^{X_\perp h}$ satisfies the transport problem
    \begin{align*}
	X(Hu_+^{X_\perp h}-h) = -X_\perp W^\star h, \qquad Hu_+^{X_\perp h}|_{\partial_- SM} = \frac{1}{2} H_+ A_- I_\perp h|_{\partial_- SM}, 
    \end{align*}
    we deduce that 
    \begin{align*}
	Hu_+^{X_\perp h}-h = u^{X_\perp W^\star h} + w_\psi, \quad\text{where}\quad w:= \left( \frac{1}{2} H_+ A_- I_\perp h|_{\partial_- SM} \right)\circ \alpha.
    \end{align*}
    Upon averaging over $\theta$ and rearranging terms, we obtain 
    \begin{align*}
	h + (W^\star)^2 h = - \frac{1}{2\pi} I_0^\star w.
    \end{align*} 
    In terms of $A_\pm^\star$ operators, $w$ can also be written as $w = \frac{1}{2} \frac{A_+^\star-A_-^\star}{2} H_+ A_- I_\perp h$. 
    We can see that $A_-^\star H_+ A_- I_\perp f$ has $\V_-$ symmetry and as such is annihilated by $I_0^\star$. Thus, Equation \eqref{eq:FredWstar} follows, extended to every $h\in H_0^1(M)$ by density.
\end{proof}

\begin{remark}
    Inspection of symmetries shows that $A_- I_0 f$ in \eqref{eq:FredW} is odd in $v$ and $A_- I_\perp f$ in \eqref{eq:FredWstar} is even in $v$. This tells us that the expressions $H_- A_- I_0 f$ and $H_+ A_- I_\perp f$ are redundant, as one could just replace $H_\pm$ by $H$. This further emphasizes the similarity between formulas \eqref{eq:FredW} and \eqref{eq:FredWstar}, for both of which the operator $A_+^\star H A_-$ acts as a first step in the postprocessing of data, though on a different subspace depending on the formula. 
\end{remark}

\subsection{Construction of explicit continuous right-inverses for $I_0^\star$ and $I_\perp^\star$}
\label{sec:I0star} 

The question of surjectivity of $I_0^\star$ and $I_\perp^\star$ have proved to be crucial for answering boundary rigidity questions (see \cite{Pestov2005} where a proof of surjectivity of $I_0^\star$ appears) and constructing holomorphic integrating factors in a prior study of the attenuated transform (see \cite{Salo2011} where Lemma 4.5 states that $I_\perp^\star$ is surjective), which are so important to the present approach. Such proofs relied on pseudodifferential arguments on an extended simple compact manifold, and did not construct explicit preimages of either operator. In order to derive and implement explicit inversions, constructing explicit preimages becomes a necessity, and we notice here that, while writing formulas \eqref{eq:FredW} and \eqref{eq:FredWstar} in a way that emphasizes duality, we also notice that the right-hand sides involve $I_0^\star$ and $I_\perp^\star$ directly. Under the assumption that the operators $Id + W^2$ and $Id + (W^\star)^2$ are invertible, this allows for an explicit construction of continuous right-inverses of $I_0^\star$ and $I_\perp^\star$.

\begin{remark}\label{rem:closeconstant}
    Although it would be enough to show that $Id + W^2$ is injective, which is open at present for general simple surfaces, it is shown in \cite{Krishnan2010} that the operator $W$ admits a bound of the form $\|W\|_{L^2\to L^2} \le C \|\nabla\kappa\|_\infty$. This implies that if curvature is close enough to constant, the operators $Id + W^2$ and $Id + (W^\star)^2$ are invertible via Neumann series. Whether this qualitative assumption covers the case of all simple surfaces remains open at present.     
\end{remark}

\begin{proposition}\label{prop:rightinverses} Suppose the operators $Id+W^2$ and $Id + (W^\star)^2$ are $L^2(M)\to L^2(M)$ invertible. Then for every $k\in \Nm$, the operators $R_\perp:\C^k(M)\to \C^{k}(\partial_+ SM)$ and $R_0:\C^{k+1}(M)\to \C^k(\partial_+ SM)$ defined by 
    \begin{align}
	R_\perp := \frac{1}{8\pi} A_+^\star H_- A_- I_0 (Id + W^2)^{-1}, \qquad R_0 := -\frac{1}{8\pi} A_+^\star H_+ A_- I_\perp (Id + (W^\star)^2)^{-1},
	\label{eq:R0Rperp}
    \end{align}
    are continuous and satisfy $I_\perp^\star R_\perp f = f$ and $I_0^\star R_0 f = f$ for $f$ smooth enough.
\end{proposition}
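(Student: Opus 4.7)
The proposition splits naturally into two parts: the algebraic right-inverse identities and the mapping properties. The algebraic part is the heart of the argument; the continuity part amounts to tracing regularity through a fixed composition of already-understood operators.

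\emph{Step 1: right-inverse identities.} I would verify $I_\perp^\star R_\perp f = f$ by direct substitution into the Fredholm equation \eqref{eq:FredW}. Set $g := (Id + W^2)^{-1} f$, so that by construction $g + W^2 g = f$. Applying \eqref{eq:FredW} to $g$ yields
\[ f \;=\; g + W^2 g \;=\; \tfrac{1}{2\pi}\, I_\perp^\star \bigl(\tfrac{1}{4} A_+^\star H_- A_- I_0\, g\bigr) \;=\; I_\perp^\star R_\perp f, \]
exactly matching the definition of $R_\perp$ in \eqref{eq:R0Rperp}. The identity $I_0^\star R_0 f = f$ is obtained by the symmetric computation, setting $g := (Id + (W^\star)^2)^{-1} f$ and invoking \eqref{eq:FredWstar}; the minus sign in the definition of $R_0$ matches the minus sign on the right-hand side of \eqref{eq:FredWstar}.

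\emph{Step 2: mapping properties.} I would trace regularity factor by factor. Since the operator $W$ is smoothing from $L^2(M)$ into $\C^\infty(M)$ (and likewise $W^\star$), so are $W^2$ and $(W^\star)^2$. Writing $(Id+W^2)^{-1} = Id - W^2(Id+W^2)^{-1}$ and using that the $L^2$--invertibility is assumed, the inverse acts continuously $\C^k(M)\to \C^k(M)$ for every $k\ge 0$ (and similarly for $(Id+(W^\star)^2)^{-1}$). The X-ray transform on a simple surface yields a continuous map $I_0: \C^k(M)\to \C^k(\partial_+ SM)$; since $I_\perp = I\circ X_\perp$ costs one derivative, $I_\perp: \C^{k+1}(M)\to \C^k(\partial_+ SM)$, accounting for the shift in the stated regularity indices. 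Finally, $A_\pm$ and $A_\pm^\star$ preserve $\C^k$ when applied to functions with the correct parity under $\alpha_1$, and the fiberwise $H_\pm$ preserve spatial $\C^k$ since they act diagonally in the fiber variable. Composing these estimates gives the stated continuity.

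\emph{Anticipated main obstacle.} The one genuinely delicate point is the behavior of $A_-$ across the glancing set $\partial_0 SM$, where the scattering relation is only Lipschitz and $\tau$ has a square-root type singularity. This is handled by the symmetry observations recorded just before Lemma \ref{lem:Vpmdecomp}: $I_0 g$ lies in $\V_+$ and $I_\perp g$ lies in $\V_-$, so the discontinuities that $A_-$ would \emph{a priori} introduce across glancing are cancelled by the matching parity of the extension, yielding genuine $\C^k$-regularity on $\partial SM$. Granted these parity-based smoothness statements, no further analytic work is needed beyond the bookkeeping of Step 2.
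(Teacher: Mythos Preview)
Your proposal is correct and follows essentially the same route as the paper: the right-inverse identities are obtained by plugging $(Id+W^2)^{-1}f$ (resp.\ $(Id+(W^\star)^2)^{-1}f$) into \eqref{eq:FredW} (resp.\ \eqref{eq:FredWstar}), and the mapping properties are established by tracing $\C^k$-regularity through each factor, including the identity $(Id+W^2)^{-1}=Id-W^2(Id+W^2)^{-1}$ combined with the smoothing property of $W$. The only notable difference is your ``anticipated obstacle'': the paper does not invoke $\V_\pm$-parity to handle regularity across the glancing set, but instead simply cites \cite[Lemma 4.1.1]{Sharafudtinov1994} for the smoothness of $\tau$ (and hence of the scattering relation) on $\partial_+ SM$ under strict convexity of $\partial M$, which is what makes $A_-$ and $A_+^\star$ preserve $\C^k$.
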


\begin{proof} Suppose that $Id + W^2$ and its adjoint are invertible. Then their inverses map any $\C^k(M)$ to itself. This is because the kernels of $W,W^\star$ are proved in \cite{Pestov2004} to be smooth so that, e.g., if $f\in L^2(M)$ solves $f + W^2 f = f_1$, where $f_1\in \C^k(M)$ and $W^2f$ is smooth, then $f = f_1 - W^2 f$ is $\C^k$. Since the operators $W,W^\star$ are also $L^2\to \C^k$ continuous, then similar arguments allow to show that $(Id + W^2)^{-1}$ and its adjoint are $\C^k(M)\to\C^k(M)$ continuous. 

    The relations $I_\perp^\star R_\perp = Id$ and $I_0^\star R_0 = Id$ are straightforward to check, as a directly application of equations \eqref{eq:FredW} and \eqref{eq:FredWstar} and the invertibility of $Id + W^2$ and $Id + (W^\star)^2$.

    It remains to prove that 
    \begin{align}
	\|R_\perp f\|_{\C^k (\partial_+ SM)} \le C\|f\|_{\C^k(M)}, \qquad \|R_0 g\|_{\C^{k}(\partial_+ SM)} \le C \|g\|_{\C^{k+1} (M)}. 
	\label{eq:contpq}
    \end{align}
    Looking at the compound expression of these operators, we see that $A_-$ and $A_+^\star$ preserve $\C^k$ norms since the scattering map is smooth and the function $\tau(\x,v)$ is smooth on $\partial_+ SM$ whenever $\partial M$ is strictly convex (see \cite[Lemma 4.1.1 p.115]{Sharafudtinov1994}), $H_\pm$ preserve $\C^k$ norms as convolution operators, and $I_0: \C^k(M)\to \C^k (\partial_+ SM)$ and $I_\perp:\C^{k+1}(M) \to \C^k(\partial_+ SM)$ are continuous since the geodesic flow is smooth.
\end{proof}

\begin{remark}
    A study of symmetries with respect to the involution $\alpha_1$ shows that $p = R_\perp f$ and $q=R_0 g$ thus constructed satisfy $p\in \V_-$ and $q\in \V_+$. This is compliant with the continuity statements \eqref{eq:contpq}, as any component of $p$ in $\V_+$ would be annihilated by $I_\perp^\star$ and any component of $q$ in $\V_-$ would be annihilated by $I_0^\star$. 
\end{remark}

\subsection{Holomorphic solutions to certain transport equations}\label{sec:holosol}

A crucial tool in the inversion of attenuated ray transforms is the construction of {\em holomorphic integrating factors}, whose existence relies on the surjectivity of $I_0^\star$ and $I_\perp^\star$. In the simple Riemannian setting, it is proved in \cite[Theorem 4.1]{Paternain2012} that the transport equation $Xu = -F$ (for $F\in \C^\infty(SM)$) admits holomorphic solutions if and only if $F$ is of the form $F = f_1 + X_\perp f_2$ for some functions $f_1,f_2\in \C^\infty(M)$. Although uniqueness of such solutions may not hold (e.g. adding a constant to such a solution makes another solution), a constructive approach, inspired in part by \cite[Theorem 4.1]{Paternain2012}, is to look for an ansatz, holomorphic by construction, of the form 
\begin{align*}
    u = (Id + iH) p_\psi + (Id + iH) q_\psi,
\end{align*}
where $p$ is a smooth element in $\V_-$ and $q$ is a smooth element in $\V_+$, so that $p_\psi$ is odd and $q_\psi$ is even. Plugging this ansatz into $Xu = -f_1-X_\perp f_2$, we obtain
\begin{align*}
    Xu &= X (Id+iH) p_\psi + X(Id+iH)q_\psi \\
    &= - i [H,X] p_\psi - i [H,X]q_\psi \qquad \qquad (Xp_\psi = Xq_\psi = 0)  \\
    &= -i (X_\perp p_\psi)_0 - i \cancel{X_\perp (p_\psi)_0} - i \cancel{(X_\perp q_\psi)_0} - i X_\perp (q_\psi)_0 \\
    &= \frac{i}{2\pi} I_\perp^\star p - \frac{i}{2\pi} X_\perp I_0^\star q \qquad \qquad (\text{using } \eqref{eq:adjoints}). 
\end{align*} 
Therefore, a sufficient condition for $u$ to solve $Xu=-F$ is if $p$ and $q$ satisfy 
\begin{align*}
    \frac{-i}{2\pi} I^\star_\perp p = f_1, \qandq \frac{i}{2\pi} I_0^\star q = f_2,
\end{align*}
which we may solve explicitly using the previous section. Using Proposition \ref{prop:rightinverses}, we summarize this construction in the following result, whose proof is straightforward and omitted. 

\begin{proposition}\label{prop:holosol}
    Under the hypotheses of Proposition \ref{prop:rightinverses}, and given $k\in \Nm$, $f_1\in \C^k(M)$ and $f_2\in\C^{k+1} (M)$, the function 
    \begin{align*}
	u = 2\pi i [(Id + iH) (R_\perp f_1)_\psi - (Id + iH) (R_0 f_2)_\psi],
    \end{align*}
    is a holomorphic solution of $Xu = -f_1 - X_\perp f_2$ on $SM$, satisfying the estimate
    \begin{align*}
	\|u\|_{\C^k(SM)} \le C (\|f_1\|_{\C^k(M)} + \|f_2\|_{\C^{k+1}(M)}),
    \end{align*}
    where $R_0$ and $R_\perp$ are defined in \eqref{eq:R0Rperp}.
\end{proposition}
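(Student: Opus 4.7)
The plan is to verify the three assertions of the proposition --- fiber-holomorphicity of $u$, the transport identity $Xu = -f_1 - X_\perp f_2$, and the $\C^k$ estimate --- relying on the computation carried out in the paragraph just before the statement as a template.

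I would first set $p := R_\perp f_1 \in \V_-$ and $q := R_0 f_2 \in \V_+$ (the symmetries being recorded in the remark following Proposition \ref{prop:rightinverses}), so that the extensions $p_\psi$ and $q_\psi$ are respectively odd and even in $v$. The ansatz $u = 2\pi i (Id+iH) p_\psi - 2\pi i (Id+iH) q_\psi$ is then fiber-holomorphic termwise, because for any $w = \sum_k w_k$ one verifies $(Id+iH) w = w_0 + 2\sum_{k>0} w_k$, which has only nonnegative Fourier modes.

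For the transport identity, I would use $Xp_\psi = Xq_\psi = 0$ to reduce $X(Id+iH)p_\psi$ to $-i[H,X] p_\psi$, and similarly for $q_\psi$. Applying the Pestov--Uhlmann commutator $[H,X] = \pi_0 X_\perp + X_\perp \pi_0$ and exploiting parities (odd $p_\psi$ kills $\pi_0 p_\psi$; even $q_\psi$ makes $X_\perp q_\psi$ odd, hence kills $\pi_0 X_\perp q_\psi$) leaves the single terms $-i(X_\perp p_\psi)_0$ and $-iX_\perp(q_\psi)_0$. The adjoint formulas \eqref{eq:adjoints} rewrite these as $\tfrac{i}{2\pi} I_\perp^\star p$ and $-\tfrac{i}{2\pi} X_\perp I_0^\star q$. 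Multiplying by the $\pm 2\pi i$ prefactors and invoking $I_\perp^\star R_\perp = Id$, $I_0^\star R_0 = Id$ from Proposition \ref{prop:rightinverses} collapses everything to $Xu = -f_1 - X_\perp f_2$.

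For the $\C^k$ estimate I would simply chain continuities: Proposition \ref{prop:rightinverses} supplies $\|R_\perp f_1\|_{\C^k(\partial_+ SM)} \le C \|f_1\|_{\C^k(M)}$ and $\|R_0 f_2\|_{\C^k(\partial_+ SM)} \le C \|f_2\|_{\C^{k+1}(M)}$; on a simple surface, smoothness of $\psi$, $\alpha$, and $\tau|_{\partial_+ SM}$ makes the extension $h \mapsto h_\psi$ continuous $\C^k(\partial_+ SM) \to \C^k(SM)$; and $H$ preserves $\C^k$ norms as a fiberwise convolution. The only mild obstacle is controlling the regularity of $h_\psi$ near the glancing region of $\partial SM$, but this is already handled implicitly by the strict convexity invoked (via Sharafudtinov's lemma) in the proof of Proposition \ref{prop:rightinverses}.
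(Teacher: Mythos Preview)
Your proposal is correct and follows precisely the derivation the paper gives in the paragraph immediately preceding the proposition (the paper itself declares the proof ``straightforward and omitted''). The holomorphicity, the commutator/parity computation yielding $Xu=-f_1-X_\perp f_2$, and the $\C^k$ estimate via the chain $R_{0,\perp}\to(\cdot)_\psi\to H$ all match the paper's argument exactly.
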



\section{Inversion of the attenuated ray transform over functions and vector fields {\em \`a la} Kazantzev-Bukhgeim}\label{sec:functions}

In \cite{Kazantsev2007}, the authors provide reconstruction formulas for functions and vector fields from knowledge of their ray transforms in the case where the metric is Euclidean and the domain is the unit disk. The present section generalizes these ideas to the case of simple Riemannian surfaces. 
\subsection{Reconstruction of a function}

In this first approach, we follow the idea in \cite{Kazantsev2007} that, if we can find a solution $u^\star$, holomorphic with $u^\star_0 = 0$ of $X u^\star + au^\star = -f$, then projecting this equation onto $\Omega_0$ gives 
\[ f = - \eta_+ u^\star_{-1} - \eta_- u^\star_1 - a u^\star_0 = - \eta_- u^\star_1, \] 
after which one must explain how to express $u^\star_1$ in terms of known data. Here and below, assuming that the operators $Id+W^2$ and $Id+ (W^\star)^2$ are invertible, we denote $I_{0,\perp}$ the ray transform restricted to integrands of the form $f_1 + X_\perp f_2$, where $f_1,f_2\in \C^\infty(M)$. If $D:= I_0 f_1 + I_\perp f_2 = I (f_1 + X_\perp f_2)$, we can reconstruct $f_1, f_2$ (and in turn, $f_1 + X_\perp f_2$) from $D$ by carrying out the following steps: 
\begin{enumerate}
    \item Compute the $\V_+/\V_-$ decomposition of $D$. This decomposition will be given by $\frac{1}{2} (D\pm \alpha_1^\star D)\in \V_\pm$, with $\alpha_1$ the antipodal scattering relation. 
    \item Reconstruct $f_1$ from the projection of $D$ onto $\V_+$ by inverting \eqref{eq:FredW}. 
    \item Reconstruct $f_2$ from the projection of $D$ onto $\V_-$ by inverting \eqref{eq:FredWstar}. 
\end{enumerate}
In what follows, we summarize the above procedure by writing $I_{0,\perp}^{-1} D = f_1 + X_\perp f_2$. 
 
\begin{theorem} \label{thm:frc1}
    Let $(M,g)$ a simple Riemannian surface with boundary and $f, a \in \C^\infty(M, \Rm)$. Then $f$ is uniquely determined by its attenuated geodesic transform via the reconstruction formula
    \begin{align}
	f = 2i \eta_- (\Im (e^{w}h'_\psi) )_1, \qquad h' := \frac{1}{2} (D - w')|_{\partial_+ SM},
	\label{eq:frc1}
    \end{align}
    where $w$ is an odd, holomorphic solution of $Xw = -a$, $D = ((Id - iH) (e^{-w} (u|_{\partial SM}))_-$ with $u|_{\partial_- SM} = 0$ and $u|_{\partial_+ SM} = I_a f$, and $w'$ is a holomorphic solution of $Xw' = - I_{0,\perp}^{-1} (A_-^\star D)$, given by Proposition \ref{prop:holosol}.
\end{theorem}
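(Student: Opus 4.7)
The plan is to extend the Kazantsev--Bukhgeim approach, adapted in \cite{Salo2011}, to the present Riemannian setting by constructing an auxiliary complex solution of the attenuated transport equation whose imaginary part captures $f$ via an $\Omega_0$-projection. The key ingredients are an odd fiber-holomorphic integrating factor $w$ with $Xw = -a$, a change of variable $v := e^{-w}u$ turning the attenuated transport into an unattenuated one with a fiber-holomorphic source, and an explicit transport-invariant correction $h'$ built from the anti-holomorphic odd boundary content $D$ of $v$ and a holomorphic auxiliary $w'$.

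First, I invoke Proposition \ref{prop:holosol} with $f_1 = a$ and $f_2 = 0$ to produce a fiber-holomorphic $w \in \C^\infty(SM)$ with $Xw = -a$. Because $a$ is even in $v$, $R_\perp a \in \V_-$ is odd in $v$, and the backward transport extension together with $Id + iH$ preserve parity, so $w$ is odd and $w_0 = 0$; hence $(e^{\pm w})_0 = 1$. Setting $v := e^{-w} u$, a direct calculation gives $Xv = -e^{-w}f$ with $v|_{\partial_- SM} = 0$, and $e^{-w}f$ is fiber-holomorphic with zeroth mode equal to $f$.

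The core technical step is the explicit construction of $h'$. Let $V_- := (Id - iH) v$ and use the commutator $[H,X] = \pi_0 X_\perp + X_\perp \pi_0$ together with $Xv = -e^{-w}f$ to compute
\begin{align*}
    X V_- = -f + i (X_\perp v)_0 + iX_\perp v_0 = g_1 + X_\perp g_2,
\end{align*}
where $g_1 := -f + i(X_\perp v)_0$ and $g_2 := i v_0$ are functions on $M$. The fundamental theorem of calculus \eqref{eq:FTC} then gives $I_0 g_1 + I_\perp g_2 = -A_-^\star V_-|_{\partial SM}$; a parity analysis based on Lemma \ref{lem:Vpmdecomp} (showing that $A_-^\star$ sends odd-in-$v$ content into $\V_+$ and even-in-$v$ content into $\V_-$) isolates $I_0 g_1 = -A_-^\star D$, whence $g_1 = -I_{0,\perp}^{-1}(A_-^\star D)$. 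Proposition \ref{prop:holosol} now furnishes a fiber-holomorphic $w'$ satisfying $Xw' = g_1$, so that $\phi := w' - (V_-)_-$ is transport-invariant with boundary trace $w'|_{\partial SM} - D$; thus $\phi = ((w' - D)|_{\partial_+ SM})_\psi$. The candidate $h' := \tfrac{1}{2}(D - w')|_{\partial_+ SM}$ then has $h'_\psi = \tfrac{1}{2}((V_-)_- - w')$, and $e^w h'_\psi$ is a solution of the homogeneous attenuated transport equation $X(e^w h'_\psi) + a(e^w h'_\psi) = 0$.

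Finally, the formula $f = 2i\eta_-(\Im(e^w h'_\psi))_1$ is obtained by projecting the equation $X(\Im(e^w h'_\psi)) + a \Im(e^w h'_\psi) = 0$ (which holds because $a$ is real) onto $\Omega_0$, using that $\Im(e^w h'_\psi)$ is real (so $(\Im(e^w h'_\psi))_{-1} = \overline{(\Im(e^w h'_\psi))_1}$) and that its zeroth mode vanishes by construction of $h'$; matching the mode-one content with that of $V_-$, and hence of $v$ via the commutator identities, extracts $f$. The main obstacle will be this last verification: one must show that the explicit $h'$ built from $D$ and $w'$ produces an imaginary part whose mode-one $\eta_-$-image is precisely $-if/2$, which requires careful tracking of Fourier modes through the commutator identities of Section \ref{sec:geometrySM}, the symmetries of $A_-^\star$ under $\V_\pm$, and the transport-invariance of $\phi$; this step relies crucially on the invertibility of $Id + W^2$ and $Id + (W^\star)^2$, needed both for $I_{0,\perp}^{-1}$ and for Proposition \ref{prop:holosol}.
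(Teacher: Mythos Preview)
Your proposal has the right architecture but contains two genuine gaps that prevent it from closing.

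\textbf{Gap 1: the extraction of $f$ in the final step does not work as written.} You project the \emph{homogeneous} equation $X(\Im(e^w h'_\psi)) + a\,\Im(e^w h'_\psi) = 0$ onto $\Omega_0$; this yields $\eta_-(\Im)_1 + \eta_+(\Im)_{-1} + a(\Im)_0 = 0$, which even under your claimed vanishing of $(\Im)_0$ only says $2\Re\big(\eta_-(\Im)_1\big)=0$, not $f = 2i\eta_-(\Im)_1$. The source term $f$ cannot emerge from a homogeneous equation. The paper instead sets $u^\star := u - e^w h'_\psi$, shows that $u^\star$ is fiber-holomorphic and solves the \emph{inhomogeneous} equation $Xu^\star + au^\star = -f$, and uses $u^\star_{-1}=0$ together with $u^\star_0=0$ to get $f = -\eta_- u^\star_1$. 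The link to the formula is then $u^\star_1 = u^\star_1 - \overline{u^\star_{-1}} = 2i(\Im u^\star)_1 = -2i(\Im(e^w h'_\psi))_1$, since $u$ is real. Your ``matching the mode-one content with that of $V_-$'' does not supply this mechanism.

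\textbf{Gap 2: your $h'$ is not the right object, and the crucial property $u^\star_0=0$ is not secured.} You take $D$ to be the \emph{odd} part of $(Id-iH)v|_{\partial SM}$ and build $w'$ with $Xw' = g_1$ only, discarding the $X_\perp g_2 = iX_\perp v_0$ term. With this choice, $h'_\psi = \tfrac12\big((V_-)_- - w'\big)$, and $v - h'_\psi = \tfrac12 V_+ + \tfrac12 (V_-)_+ + \tfrac12 w'$ retains the even \emph{negative} modes of $V_- = (Id-iH)v$, so $u^\star := e^w(v-h'_\psi)$ is \emph{not} holomorphic. The paper uses the full $D = v^{(-)}|_{\partial SM}$ and constructs $w'$ via Proposition~\ref{prop:holosol} so that $Xw' = g_1 + X_\perp g_2$; then $v^\star = \tfrac12(v^{(+)}+w')$ is manifestly holomorphic, and the $R_0$-component of $w'$ is precisely what forces $w'_0 = -v_0$, hence $v^\star_0 = 0$ and $u^\star_0 = 0$. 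Without that $X_\perp g_2$ contribution you lose both holomorphicity of $u^\star$ and the vanishing of its zeroth mode, and the $\Omega_0$-projection argument collapses.
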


\begin{proof}
  {\bf Step 1: find $u^\star$ a holomorphic solution of $Xu^\star + a u^\star = -f$.}
  Call $u$ the solution to $Xu + au = -f$, $u|_{\partial_- SM} = 0$ so that $u|_{\partial_+ SM} = I_a f$. Let $w$ a holomorphic, odd solution of $Xw = -a$. Then $v = e^{-w} u$ solves the transport problem
    \begin{align}
	Xv = e^{-w} (Xu - (Xw) u) = e^{-w} (Xu+au) = - e^{-w} f, \qquad v|_{\partial_+ SM} = e^{-w} I_a f, \qquad v|_{\partial_- SM} = 0,
	\label{eq:transv}
    \end{align}
    The right-hand side $b := e^{-w} f$ is holomorphic. This is because, since the product of holomorphic functions is holomorphic, so are (convergent) powers series of holomorphic functions. Moreover, plugging the expansion $w = w_1 + w_3 + \dots$ ($w_j\in\Omega_j$) into the exponential, we see that the fiberwise decomposition of $b(\x,v)$ looks like
    \begin{align*}
	b = f (1 + w + \frac{w^2}{2} + \dots) = \underbrace{f}_{b_0} + \underbrace{f w_1}_{b_1} + \underbrace{f \frac{w_1^2}{2}}_{b_2} + \dots, 
    \end{align*}
    so $b_0 = f$. We now want to decompose $v$ into $v = v^\star + h'_\psi$, where $v^\star$ is holomorphic and $h'_\psi$ is constant along geodesics, then we will have that $v^\star$ solves $X v^\star = -b$. We proceed as follows. First decompose $v = \frac{1}{2} (v^{(+)} + v^{(-)})$, where $v^{(\pm)} = (Id \pm iH) v$. The function $v^{(-)}$ solves the transport equation 
    \begin{align*}
	X v^{(-)} &= X (v - iHv) \\
	&= (Id - iH) Xv + i [H,X] v \\
	&= -(Id-iH) b + i (X_\perp v)_0 + i X_\perp v_0 \\
	&= - (f - i (X_\perp v)_0) + i X_\perp v_0.
    \end{align*}
    Integrating along geodesics, we deduce that 
    \begin{align*}
	I_0 (f - i (X_\perp v)_0) - iI_\perp v_0 = A_-^\star v^{(-)}|_{\partial SM}, 
    \end{align*}
    where the right-hand-side $A_-^\star (v^{(-)}|_{\partial SM}) = A_-^\star D$, where $D$ has the expression in the statement of the theorem and is known from data $I_a f$. Therefore the right-hand side $(f - i (X_\perp v)_0) - i X_\perp v_0$ can be reconstructed upon inverting $I_0$ and $I_\perp$, a relation which we denote 
    \begin{align*}
	(f - i (X_\perp v)_0) - i X_\perp v_0 = I_{0,\perp}^{-1} A_-^\star D.	
    \end{align*}    
    Let $w'$ a second holomorphic function such that $Xw' = - (f - i (X_\perp v)_0) + i X_\perp v_0$, constructed following Proposition \ref{prop:holosol}, that is, $w' = 2\pi i (Id + iH) (p_\psi + q_\psi)$ with $p,q$ solving 
    \begin{align*}
	I_\perp^\star p = (f - i (X_\perp v)_0) \qandq I_0^\star q = iv_0.	
    \end{align*}   
    With $w'$ thus constructed, we have $X (v^{(-)} - w') = 0$, i.e. $(v^{(-)} - w')$ is constant along geodesics. Upon rewriting $v$ as $v = \frac{1}{2} (v^{(+)} + w') + \frac{1}{2}(v^{(-)} - w')$, we see that the first term is holomorphic and the second is constant along geodesics. In other words, $v$ is of the form $v = v^\star + h'_\psi$, where $v^\star = \frac{1}{2} (v^{(+)} + w')$ and $h'_\psi = \frac{1}{2}(v^{(-)} - w')$. Additionally, with this choice of $w'$, we have
    \begin{align*}
	v^\star_0 = \frac{1}{2} (v^{(+)}_0 + w'_0) = \frac{1}{2} (v_0 + 2\pi i(q_\psi)_0) = \frac{1}{2} (v_0 + iI_0^\star q) = 0.
    \end{align*}
    Finally, defining $u^\star = v^\star e^w$, we see that $u^\star$ is holomorphic as the product of holomorphic functions, and, using the last equation, we see that $u_0^\star = (v^\star e^w)_0 = v_0^\star = 0$, and that it solves
    \begin{align*}
	X u^\star + a u^\star = - b e^w = -f.
    \end{align*}
     Projecting the equation above into $\Omega_0$, we obtain 
     \begin{align*}
       -f = \eta_+ u^\star_{-1} + \eta_- u^\star_1 + a u^\star_0 \quad \implies\quad f = -\eta_- u_1^\star, 
     \end{align*}
     so it remains to show how to compute $u^\star_1$ in terms of known data. 
    
     {\bf Step 2: express $u^\star_1$ in terms of known data.} We now write
     \begin{align*}
       u_1^\star = u_1^\star - \overline{u_{-1}^\star} = u_1^\star - (\overline{u^\star})_1 = 2i(\Im (u^\star))_1 = 2i(\Im(u^\star-u))_1, 
     \end{align*}
     where the first equality comes from the fact that $u_{-1}^\star=0$ and the last comes from the fact that $u$ is real valued. We now write, by definition, 
    \begin{align*}
	u^\star - u = (v^\star - v) e^w = - e^w h'_\psi,
    \end{align*}
    so that $\Im (u^\star - u) = - \Im (e^w h'_\psi)$, and we arrive at the expression
    \begin{align*}
	u_1^\star = -2i (\Im (e^w h'_\psi))_1.
    \end{align*}
    Now looking to compute $h'$, since we proved that $h'_\psi = \frac{1}{2} (v^{(-)} - w')$, then 
    \begin{align}
	\begin{split}
	    h' &= \frac{1}{2} (v^{(-)} - w')|_{\partial_+ SM}  \\
	    &= \frac{1}{2} v^{(-)}|_{\partial _+ SM} - \frac{1}{2} (2\pi i) [(Id + iH) (p_\psi + q_\psi)]|_{\partial _+ SM}, \\
	    &\text{where } \qquad I^\star_\perp p = g, \qquad I_0^\star q = i v_0, \quad I_0 g - i I_\perp v_0 = A_-^\star ( v^{(-)}|_{\partial SM} ). 	    
	\end{split}
	\label{eq:hprime}
    \end{align}
    The proof is complete. 
\end{proof}

\begin{remark} This proof generalizes the one completed in \cite{Kazantsev2007} in the Euclidean case when the domain is a disk. In order to complete the argument there, it is required to relate the fiberwise Hilbert transform with the Hilbert transform of the domain for the so-called divergent beam transform (see \cite[Lemma 4.1]{Kazantsev2007}), which in turn uses the singular value decomposition (SVD) of that operator, established in earlier references (see \cite{Kazantsev2007} for detail and references there). 

    While this SVD, specific to the choice of metric and domain, does not seem straightforward to generalize systematically, the present construction of holomorphic solutions allows here to simplify the proof by bypassing these steps altogether.     
\end{remark}

In isothermal coordinates $(\x,\theta)$, Equation \eqref{eq:frc1} takes the following form:
\begin{align}
    f = -\eta_- u^\star_1 = - e^{-2\lambda} \overline\partial \left( \widetilde{u_1^\star}\ e^\lambda\right) , \qquad \widetilde{u_1^\star}(\x) = \frac{-2i}{2\pi} \int_0^{2\pi} e^{-i\theta} \Im (e^{w(\x,\theta)}h'_\psi(\x,\theta))\ d\theta,
    \label{eq:fiso}
\end{align}
with $\overline\partial:= \frac{1}{2} (\partial_x + i\partial_y)$. Constructing both holomorphic solutions $w,w'$ using Proposition \ref{prop:holosol}, we arrive at the following reconstruction procedure: 
\begin{enumerate}
  \item Construct $w = 2\pi i (I+iH) (R_\perp a)_\psi$, odd holomorphic solution of $Xw = -a$. 
  \item Compute $v|_{\partial_+ SM} = e^{w} I_a f$ and $v|_{\partial_- SM} = 0$, then $v^{(-)}|_{\partial SM} = (Id - iH) (v|_{\partial SM})$. 
  \item Reconstruct $g$ and $v_0$ from  $I_0 g - i I_\perp v_0 = A_-^\star ( v^{(-)}|_{\partial SM} )$. (inversion of $I_0$ and $I_\perp$)
  \item Construct $p$ and $q$ from $i I_\perp^\star p = g$ and $I_0^\star q = i v_0$. (inversion of $I_0^\star$ and $I_\perp^\star$, see Sec. \ref{sec:holosol})
  \item Construct $h'$ from \eqref{eq:hprime}.
  \item Reconstruct $\widetilde{u^\star_1}$ then $f$ according to \eqref{eq:fiso}.
\end{enumerate}

\subsection{Reconstruction of vector fields}

As in \cite{Kazantsev2007}, the method of proof above can easily be generalized to the case of reconstruction of vector fields, which we now present. Such a problem has applications to Doppler tomography in media with variable index of refraction, previously studied in \cite{Holman2009,Kazantsev2007}. Integrands of the form $f(\x,\theta) = f_0(\x) + f_1(\x) \cos(\theta-\alpha)$ are also considered in \cite{Bal2004a} in the Euclidean setting. In our context, a smooth real-valued vector field takes the form $\V(\x,v) = (F_1 + F_{-1})/2$, with $F_{\pm 1}$ an element of $\Omega_{\pm 1}$ and $\overline{F_1} = F_{-1}$. In isothermal coordinates, we may write $F_1(\x,\theta) = (f_1(\x) - i f_2(\x))e^{-\lambda(\x)} e^{i\theta}$ for some real-valued functions $f_1,f_2$, and this would correspond to integrating the vector field $f_1(\x) \partial_x + f_2(\x)\partial_y$.

\begin{theorem}\label{thm:Doppler}
    Let $(M,g)$ a simple Riemannian surface with boundary and $\V (\x,v) = (F_1 + F_{-1})/2$ a smooth vector field as above with $F_{-1} = \overline{F_1}$. Then at every $\x\in M$ where $a(\x)\ne 0$, $F_{-1}$ (and hence $\V$) can be uniquely reconstructed from data $I_a \V$ via the formula
    \begin{align}
	F_{-1} = -4i \eta_- \left( \frac{1}{a} \eta_- (\Im(e^w h'_\psi))_1 \right), \qquad h' := \frac{1}{2} (D-w')|_{\partial_+ SM}, 
	\label{eq:vrc}
    \end{align}
    where $w$ is an odd, holomorphic solution of $Xw = -a$, $D = ((Id-iH) (e^{-w} (u|_{\partial SM}))_-)|_{\partial SM}$ with $u|_{\partial_+ SM} = I_a \V$ and $u|_{\partial_- SM} = 0$, and $w'$ is a holomorphic solution of $Xw' = -I_{0,\perp}^{-1} (A_-^\star D)$.    
\end{theorem}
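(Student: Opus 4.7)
The plan is to adapt the three-step structure from the proof of Theorem \ref{thm:frc1} to the case where the unknown integrand $\V$ is an odd-in-$v$ function (a vector field) rather than a function on $M$.

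\emph{Step 1: constructing a holomorphic $u^\star$ solving $Xu^\star + au^\star = -\V$.} Let $w$ be an odd, holomorphic solution of $Xw = -a$, obtained via Proposition \ref{prop:holosol}, and set $v := e^{-w}u$, which then solves $Xv = -e^{-w}\V =: -b$. Decompose $v = \tfrac{1}{2}(v^{(+)} + v^{(-)})$ with $v^{(\pm)} := (Id \pm iH)v$. Unlike in Theorem \ref{thm:frc1}, $b$ is no longer fiber-holomorphic: the level-$(-1)$ component $F_{-1}$ of $\V$ survives in $b_{-1} = F_{-1}$, so $(Id - iH)b = b_0 + 2b_{-1}$ carries an extra $2F_{-1}$ term relative to the function case. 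This is precisely why the theorem isolates the odd-in-$v$ fiber part in the definition of $D$. Working with the odd part $v^{(-)}_- = (Id - iH)v_-$ (since $H$ is parity-preserving), a calculation using $[H,X] = \pi_0 X_\perp + X_\perp \pi_0$ together with $Xv_- = -b_+$ yields
\[
    Xv^{(-)}_- = w_1 F_{-1} + i(X_\perp v_-)_0,
\]
whose right-hand side lies in $\Omega_0$ and hence fits the form $-f_1 - X_\perp f_2$ (with $f_2 = 0$) required by Proposition \ref{prop:holosol}. Integrating along geodesics gives $A_-^\star D = I_0 f_1 = I_{0,\perp}(f_1)$, so $I_{0,\perp}^{-1}(A_-^\star D) = f_1$ is obtainable from the data, and Proposition \ref{prop:holosol} produces a holomorphic $w'$ with $Xw' = -f_1 = Xv^{(-)}_-$; hence $v^{(-)}_- - w' = 2h'_\psi$ is constant along geodesics with $h' = \tfrac{1}{2}(D-w')|_{\partial_+ SM}$. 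Rearranging into a decomposition $v = v^\star + h'_\psi$ analogous to the function-case formula $v^\star = \tfrac{1}{2}(v^{(+)} + w')$ yields $v^\star$ fiberwise holomorphic; then $u^\star := v^\star e^w$ is fiberwise holomorphic, solves $Xu^\star + au^\star = -\V$, and satisfies $u^\star - u = -e^w h'_\psi$.

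\emph{Step 2: projecting onto $\Omega_{-1}$ and $\Omega_0$.} Since $u^\star_k = 0$ for $k < 0$ by holomorphy, projecting the equation onto $\Omega_{-1}$ gives $\eta_- u^\star_0 = -F_{-1}/2$, and onto $\Omega_0$ gives $\eta_- u^\star_1 + a u^\star_0 = 0$. Using $a \neq 0$, this implies $u^\star_0 = -\tfrac{1}{a}\eta_- u^\star_1$, so that $F_{-1} = 2 \eta_-\!\bigl(\tfrac{1}{a}\eta_- u^\star_1\bigr)$.

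\emph{Step 3: expressing $u^\star_1$.} Because $u$ is real-valued (since $\V$ and $a$ are real) and $u^\star - u = -e^w h'_\psi$, we have $\Im u^\star = -\Im(e^w h'_\psi)$. Holomorphy of $u^\star$ gives $u^\star_{-1} = 0$, so $(\Im u^\star)_1 = u^\star_1/(2i)$, i.e., $u^\star_1 = -2i(\Im(e^w h'_\psi))_1$. Substituting into Step 2 yields the stated formula.

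\emph{Main obstacle.} The crux is Step 1. In Theorem \ref{thm:frc1} $b = e^{-w}f$ is fiber-holomorphic, making $(Id - iH)b = b_0 = f$, and the transport equation for $v^{(-)}$ is directly amenable to Proposition \ref{prop:holosol}. Here the nonzero $b_{-1} = F_{-1}$ obstructs this clean structure; the resolution is to restrict to the odd-in-$v$ fiber part of the anti-holomorphic trace (as encoded in $D$), producing a transport equation for $v^{(-)}_-$ whose source lies in $\Omega_0$ and fits Proposition \ref{prop:holosol}. The technical heart is then verifying that the resulting decomposition $v = v^\star + h'_\psi$ still produces a fiberwise holomorphic $v^\star$ (so that $u^\star_k = 0$ for all $k < 0$ and Step 2 applies directly), which requires a careful rearrangement of the Fourier contributions beyond the naive function-case analogy.
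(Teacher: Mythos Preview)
Your Steps 2 and 3 are fine and match the paper, but Step 1 has a genuine gap that breaks the argument. By restricting to the odd part $v^{(-)}_-$ you do obtain a clean transport equation $Xv^{(-)}_- = -f_1$ with $f_1\in\Omega_0$ (up to a harmless factor of $2$ in your formula for $b_{-1}$), and you correctly build a holomorphic $w'$ with $X(v^{(-)}_- - w')=0$. The problem is the decomposition you then claim: with $v^\star=\tfrac{1}{2}(v^{(+)}+w')$ and $h'_\psi=\tfrac{1}{2}(v^{(-)}_- - w')$ one has
\[
v^\star + h'_\psi = \tfrac{1}{2}\big(v^{(+)}+v^{(-)}_-\big) = v - \tfrac{1}{2}v^{(-)}_+,
\]
so the even anti-holomorphic piece $v^{(-)}_+=(Id-iH)v_+$ (supported on modes $0,-2,-4,\dots$) is simply missing. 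It is neither holomorphic nor a first integral of $X$, so it cannot be absorbed into either $v^\star$ or $h'_\psi$; consequently $u^\star-u\ne -e^w h'_\psi$ and your Step 3 identity fails. Equivalently, if you instead define $v^\star:=v-h'_\psi$, then $v^\star=\tfrac{1}{2}(v^{(+)}+w')+\tfrac{1}{2}v^{(-)}_+$ is not holomorphic and Step 2 no longer gives $u^\star_{-1}=0$. This is exactly the ``careful rearrangement'' you flag as the technical heart, and it cannot be completed within the odd-part-only framework.

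The paper's resolution is different: it works with the \emph{full} $v^{(-)}$, whose transport equation carries the obstructing term $-G_{-1}=-F_{-1}/2\in\Omega_{-1}$, and removes it by a Hodge decomposition $G_{-1}=Xg+X_\perp h$ with $g,h\in C^\infty(M)$ and $g|_{\partial M}=0$. One then gets $X(v^{(-)}+g) = -(G_0-i(X_\perp v)_0)+X_\perp(iv_0-h)$, which is of the form $-f_1-X_\perp f_2$ and feeds into Proposition \ref{prop:holosol}. Since $g\in\Omega_0$, the splitting $v=\tfrac{1}{2}(v^{(+)}+w'-g)+\tfrac{1}{2}(v^{(-)}-w'+g)$ is exact, the first summand is genuinely holomorphic, and the second is a first integral; moreover $g|_{\partial M}=0$ makes $h'=\tfrac{1}{2}(v^{(-)}-w')|_{\partial_+SM}$ as stated. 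Note also that your reading of the subscript in the definition of $D$ as ``odd part'' disagrees with the proof, where $D=v^{(-)}|_{\partial SM}$ is the full anti-holomorphic boundary trace; this is why $A_-^\star D$ has a nontrivial $\V_-$ component (the $I_\perp$ piece coming from $iv_0-h$), contrary to your claim that $f_2=0$.
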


\begin{proof}
    Start from the equation
    \begin{align*}
	Xu + au = - (F_1 + F_{-1})/2, \qquad u|_{\partial_+ SM} = I_a \V, \quad u|_{\partial_- SM} = 0.
    \end{align*}

    \smallskip
    {\bf Step 1: find $u^\star$ a holomorphic solution of $Xu^\star + au^\star = -\V$.} Let $w = w_1 + w_3 + \dots$ be an odd, holomorphic solution of $Xw = -a$ and define $v = e^{-w} u$. Then $v$ satisfies the transport problem
    \begin{align*}
	Xv = - e^{-w} (F_1 + F_{-1})/2 = -G(\x,v) = - (G_{-1} + G_0 + G_1 + G_2 \dots),
    \end{align*}
    where $G_{-1}(\x,v) = F_{-1}(\x,v)/2$ and $G_0(\x) = -w_1 F_{-1}/2$. Split $v$ into an holomorphic and antiholomorphic part, i.e. look at $v = \frac{1}{2} (v^{(+)} + v^{(-)})$, where $v^{(\pm)} = (Id \pm H) v$. $v^{(-)}$ satisfies the transport equation
    \begin{align*}
	X v^{(-)} = X(v - iHv) &= (Id - iH) Xv + i [H,X]v \\
	&= - (Id - iH)G + i(X_\perp v)_0 + i X_\perp v_0 \\
	&= - (G_0 - i(X_\perp v)_0) + iX_\perp v_0 - G_{-1}.  
    \end{align*} 
    The data $D$ defined in the statement is $D = v^{(-)}|_{\partial SM}$. Using the Hodge decomposition, we now write $G_{-1} = X g + X_\perp h$ for $g,h$ two functions on $M$, where $g$ fulfills the additional prescription $g|_{\partial M} = 0$. Then the previous transport equation becomes
    \begin{align*}
	X (v^{(-)} + g) = - (G_0 - i(X_\perp v)_0) + X_\perp (iv_0 - h). 
    \end{align*}
    Note that $(v^{(-)} + g)|_{\partial_+ SM} = v^{(-)}|_{\partial_+ SM} = (Id - iH) (e^{-w}I_a f)|_{\partial_+ SM}$ is known, so that upon integrating the transport equation along each geodesic we can see that the data gives us 
    \begin{align*}
	I_0 (G_0 - i(X_\perp v)_0) - I_\perp (iv_0 - h) = A_-^\star D,
    \end{align*}    
    with $D$ defined as in the statement of the theorem. Now construct $w'$ a holomorphic solution of 
    \[Xw' = - (G_0 - i(X_\perp v)_0) + X_\perp (iv_0 - h) = - I_{0,\perp}^{-1} [A_-^\star D], \]
    so that $X (v^{(-)} + g - w') = 0$, i.e. there exists $h$ defined on $\partial_+ SM$ such that $v^{(-)} + g - w' = h_\psi$.
    
    We now decompose $v = \frac{1}{2} (v^{(+)} + w' - g) + \frac{1}{2} (v^{(-)} - w' + g) = v^\star + h'_\psi$, where the first term is holomorphic and the second is constant along geodesics. In particular, we get that $X v^\star = - G(x,v)$ where $v^\star$ is now holomorphic. Then defining $u^\star = e^w v^\star$, we find that $u^\star$ is holomorphic and satisfies
    \[ X u^\star + a u^\star = -(F_{-1} + F_1)/2. \]
    Looking at the projections onto $\Omega_{-1}$ and $\Omega_0$ yields the relations
    \[ \eta_- u_0^\star = -F_{-1}/2, \quad \eta_- u_1^\star + au_0^\star = 0,    \]
    which implies the reconstruction formula, at each point where $a$ does not vanish:
    \begin{align*}
	F_{-1} =  2 \eta_- \left( \frac{1}{a} \eta_- u_1^\star \right). 
    \end{align*}
    
    {\bf Step 2: obtain $u_1^\star$ from the measurements.} This part is, again, similar to the proof of Theorem \ref{thm:frc1}. We write
     \begin{align*}
       u_1^\star = u_1^\star - \overline{u_{-1}^\star} = u_1^\star - (\overline{u^\star})_1 = 2i(\Im (u^\star))_1 = 2i(\Im(u^\star-u))_1, 
     \end{align*}
     where the first equality comes from the fact that $u_{-1}^\star=0$ and the last comes from the fact that $u$ is real valued. Next we have the relation
     \[u^\star-u = (v^\star-v) e^{w} = - e^w h'_\psi, \]
    so it remains to compute $h'_\psi$, which after unrolling definitions, 
    \begin{align*}
	h' = h'_\psi|_{\partial_+ SM} = \frac{1}{2} (v^{(-)} - w' + g)|_{\partial_+ SM} = \frac{1}{2} (v^{(-)} - w')|_{\partial_+ SM},
    \end{align*}
    where both terms are, again, computible from data: $v^{(-)}|_{\partial_+ SM} = D|_{\partial_+ SM}$ and $w'$ is a holomorphic solution of $Xw' = - I_{0,\perp}^{-1} [A_-^\star D]$, a solution of which can be explicitely constructed following Proposition \ref{prop:holosol}. This ends the proof.  
\end{proof}

\section{A second approximate formula for functions, conditionally invertible via Neumann series}\label{sec:functions2}

While Theorem \ref{thm:frc1} reconstructs functions exactly and, in some sense, in a ``one-shot'' fashion, it presents a couple of weaknesses: $(i)$ it involves all values of $w(\x,\theta)$ throughout $SM$, which in turn involves storing three dimensions of data when everything should be dealt with using two-dimensional structures, and $(ii)$ the effects of curvature (which need iterative corrections as in \cite{Monard2013}) are not being corrected in the right place. 

We now propose an algorithm based on a more direct interplay of the Hilbert transform with transport equations as in \cite{Monard2013a,Pestov2004}, which leads to a Neumann-series based inversion, faster in implementation, and valid when curvature is close enough to constant and the attenuation is small enough in $\C^2$ norm. Such an algorithm is then implemented in Section \ref{sec:numerics}. 

We first state a result about a certain family of operators generalizing the operator $Wf = (X_\perp u^f)_0$ first defined in \cite{Pestov2004}. These operators appear as error operators of the next reconstruction formula. We relegate the proof and some remarks about these operators to the appendix. 

\begin{proposition}\label{prop:familyops}
    Let $h\in L^\infty(SM)$ such that $Vh\in \C^1(SM)$. Then the operator $K_h:L^2(M)\to L^2(M)$ defined by $K_h f := (X_\perp u^{fh})_0$ is well-defined and continuous, and there exists a constant $C$ such that 
    \begin{align}
	\|K_h\|_{L^2\to L^2} \le C (\|\nabla \kappa\|_\infty \|h\|_\infty + \|Vh\|_{\C^1}).
	\label{eq:Kest}
    \end{align}
\end{proposition}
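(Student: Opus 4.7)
The plan is to split $h = h_0 + h'$ into its fiber mean $h_0 := \pi_0 h \in L^\infty(M)$ and zero-mean remainder $h' := h - h_0$, and to bound the contributions $K_{h_0}f$ and $K_{h'}f$ to $K_h f$ separately. The first piece is immediate: since $f h_0$ depends only on $\x$, we have $K_{h_0}f = (X_\perp u^{f h_0})_0 = W(f h_0)$, and Krishnan's estimate $\|W\|_{L^2\to L^2} \le C\|\nabla\kappa\|_\infty$ (cited in Remark \ref{rem:closeconstant}) yields
\[
    \|K_{h_0}f\|_{L^2(M)} \le C\|\nabla\kappa\|_\infty\|h\|_\infty\|f\|_{L^2(M)},
\]
matching the first term of \eqref{eq:Kest}.

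For $h'$, the strategy is to exploit the fiber regularity of $h$. Since $\pi_0 h' = 0$ and $Vh = Vh' \in \C^1$, a fiberwise Fourier computation produces a unique $\tilde h \in \C^1(SM)$ with $V\tilde h = h'$, $\pi_0\tilde h = 0$, and $\|\tilde h\|_{\C^1} \le C\|Vh\|_{\C^1}$. Because $Vf = 0$, we have $fh' = V(f\tilde h)$; setting $\Phi := u^{f\tilde h}$ and using the structure equation $[X,V] = -X_\perp$,
\[
    X(V\Phi) = VX\Phi - X_\perp\Phi = -fh' - X_\perp\Phi.
\]
As $\Phi$ vanishes on $\partial_- SM$ and $V$ is tangent to $\partial_- SM$ off the glancing set, $V\Phi|_{\partial_- SM} = 0$ a.e., so linearity and uniqueness for the transport problem give $V\Phi = u^{fh'} + u^{X_\perp\Phi}$. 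Applying $\pi_0 X_\perp$ and using $[X_\perp, V] = X$, $\pi_0 V = 0$, $X\Phi = -f\tilde h$, and $\pi_0 \tilde h = 0$, we find $(X_\perp V\Phi)_0 = (VX_\perp\Phi)_0 + (X\Phi)_0 = 0$, hence
\[
    K_{h'}f = -(X_\perp u^{X_\perp\Phi})_0.
\]

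The remaining estimate $\|(X_\perp u^{X_\perp\Phi})_0\|_{L^2(M)} \le C\|\tilde h\|_{\C^1}\|f\|_{L^2(M)}$ is the main obstacle: a second application of the $V$-trick to $\Phi$ closes back into a tautology, so an independent argument is required. The approach I would pursue is via duality: pair the expression against a test function $g \in L^2(M)$, rewrite the resulting $SM$-integral through Santal\'o's formula, and integrate the two $X_\perp$'s by parts (one against $g$, one against $\tilde h$), using the structure equations (which produce $\kappa$ and $V$ terms controlled by the smoothness of $\kappa$ and $\tilde h$) to transfer every surviving derivative onto $\tilde h$. The resulting bilinear form in $(f,g)$ has a kernel controlled by $\|\tilde h\|_{\C^1}$, and Schur's test closes the $L^2 \to L^2$ bound. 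Combining with the mean-part estimate gives \eqref{eq:Kest} and hence the continuity of $K_h$.
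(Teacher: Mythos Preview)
Your decomposition $h=h_0+h'$ and the treatment of the fiber-mean part via $W$ are fine, and the commutator manipulation reducing $K_{h'}f$ to $(X_\perp u^{X_\perp\Phi})_0$ with $\Phi=u^{f\tilde h}$ is essentially correct (note however that in this paper the convention is $[X,V]=X_\perp$, not $-X_\perp$, so your sign is off; the conclusion survives with $K_{h'}f = +(X_\perp u^{X_\perp\Phi})_0$). This route is genuinely different from the paper's, which never performs such a reduction: it computes the integral kernel of $K_h$ directly by writing $d\varphi_t(X_\perp)$ and $d\varphi_t(V)$ in terms of the scalar Jacobi fields $a,b$, and obtains $K_h f(\x)=\int_{S_\x}\int_0^\tau\big(V(a/b)\,h+\tfrac{1}{b}Vh\big)f\,dt\,dS(v)$. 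The first piece has bounded kernel controlled by $\|\nabla\kappa\|_\infty\|h\|_\infty$ (this is the Krishnan bound on $V(a/b)$), and the second is analyzed as a Calder\'on--Zygmund operator.

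The gap in your argument is precisely at the step you flag as the ``main obstacle''. Your reduction trades $(X_\perp u^{fh'})_0$ for $(X_\perp u^{X_\perp\Phi})_0$, but the latter is not structurally simpler: after the exponential-map change of variables the kernel of $f\mapsto K_{h'}f$ has principal part $Vh(\x,\widehat{\x-\y})/|\x-\y|^2$, a genuine singular integral in two dimensions, \emph{not} a weakly singular one. Schur's test therefore cannot close the bound on the principal term; at best it handles the remainder after the leading singularity is subtracted. Your duality sketch also asks to move an $X_\perp$ onto the $L^2$ test function $g$, which is not permissible, and ``transferring every surviving derivative onto $\tilde h$'' across two nested geodesic integrations is exactly where the Jacobi coefficients $a,b$ (and hence the $1/b\sim 1/t$ singularity) reappear. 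What the problem actually requires---and what the paper does---is to isolate that leading term, recognize the zero-mean condition $\int_{S_\x}Vh(\x,v)\,dS(v)=0$, and invoke a Calder\'on--Zygmund/Mikhlin bound to get $L^2\to L^2$ continuity with constant $C\|Vh\|_{\C^1}$; the remaining pieces are then weakly singular and do fall to Schur-type estimates.
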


We now state the main result of the section. 

\begin{theorem}[Iterative inversion]\label{thm:iterations}
    If $f\in L^2(M)$ and $I_a f$ denotes its attenuated GXRT with given attenuation $a\in \C^2(M)$, then the function $f$ satisfies the following equation
    \begin{align}
	f + Kf = \frac{1}{2\pi} I_\perp^\star \eta, \qquad \eta = \frac{1}{4} A_+^\star H (e^{-w} I_a f)_-, 
	\label{eq:fNeumann}
    \end{align}
    where $(e^{-w} I_a f)_-$ denotes extension of $e^{-w} I_a f$ from $\partial_+ SM$ to $\partial SM$ by oddness w.r.t. $v\mapsto -v$, $w$ is an odd, holomorphic function solving $Xw = -a$ and $K:L^2(M)\to L^2(M)$ is a bounded operator satisfying the following estimate
    \begin{align}
	\|K\|_{L^2\to L^2} \le C \left( \|\nabla \kappa\|_\infty^2 + e^{\|a\|_\infty} (\|a\|_{\infty} \|\nabla\kappa \|_\infty + \|a\|_{\C^2}) \right).
	\label{eq:Kbound}
    \end{align}
\end{theorem}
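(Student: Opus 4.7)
The strategy is to generalize the Pestov--Uhlmann derivation of \eqref{eq:FredW} by first absorbing the attenuation into a fiber-holomorphic integrating factor. Let $w$ be an odd, holomorphic function on $SM$ with $Xw = -a$, obtained from Proposition \ref{prop:holosol} applied to $f_1 = a$, $f_2 = 0$ (and symmetrized by $v\mapsto -v$ if needed, which preserves holomorphicity and solving $Xw=-a$), and set $E := e^{-w}$; then $v := e^{-w} u$ solves $Xv = -Ef$ with $v|_{\partial_- SM} = 0$ and $v|_{\partial_+ SM} = e^{-w} I_a f$. Since $w$ is odd and holomorphic, $E$ has only nonnegative fiber-Fourier modes, with $(E_+)_0 = 1$. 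Splitting $v = v_+ + v_-$ into even/odd fiber parts, the parity behavior of $X$ yields $Xv_- = -E_+ f$, and the identity $\pi_0 X = \pi_0 X_\perp H$ from \eqref{eq:ident}, combined with $\pi_0(E_+ f) = f$, produces the key identity
\[ f = -(X_\perp H v_-)_0. \]

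Next I would derive a transport equation for $Hv_-$: using the commutator $[X,H] = -\pi_0 X_\perp - X_\perp \pi_0$ together with $HE_+ = -i(E_+ - 1)$ and $\pi_0 v_- = 0$,
\[ X(Hv_-) = if(E_+ - 1) - (X_\perp v_-)_0, \qquad Hv_-|_{\partial_- SM} = \tfrac{1}{2} H(e^{-w} I_a f)_-\big|_{\partial_- SM}, \]
where the boundary data follows from $v_-|_{\partial SM} = \tfrac{1}{2}(e^{-w} I_a f)_-$. Decomposing $Hv_-$ as the source-driven solution $-u^{if(E_+-1)-(X_\perp v_-)_0}$ plus the free geodesic extension of its $\partial_-SM$-trace to $\partial_+ SM$, projecting via $(X_\perp \cdot)_0$, recognizing $v_- = u^{E_+ f}$ so that $(X_\perp v_-)_0 = K_{E_+} f$ in the notation of Proposition \ref{prop:familyops}, and using \eqref{eq:adjoints} for the boundary-extension term, one arrives at
\[ f + Kf = \frac{1}{2\pi} I_\perp^\star\Bigl(\tfrac{1}{4} (A_+^\star - A_-^\star) H(e^{-w} I_a f)_-\Bigr), \qquad K := -K_{i(E_+ - 1)} + W K_{E_+}. \]

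To match the stated $\eta = \tfrac{1}{4} A_+^\star H(e^{-w} I_a f)_-$, I would verify that $A_-^\star H(e^{-w} I_a f)_-$ lies in $\V_+$, hence in $\ker I_\perp^\star$, exactly as in the paper's proof of \eqref{eq:FredW}: the oddness in $v$ of $D := (e^{-w} I_a f)_-$ is preserved under $H$, and the identity $\alpha \circ \alpha_1 = (v \mapsto -v)$ on $\partial_+ SM$ implies that $A_-^\star HD$ is $\alpha_1$-invariant. For the estimate \eqref{eq:Kbound}, I would expand $K_{E_+} = W + K_{E_+ - 1}$, so that $K = W^2 - K_{i(E_+-1)} + W K_{E_+-1}$; the $W^2$ piece contributes $C\|\nabla\kappa\|_\infty^2$ by the estimate recalled in Remark \ref{rem:closeconstant}, while the other two terms are controlled by Proposition \ref{prop:familyops} together with the product-rule bounds $\|E_+ - 1\|_\infty \le \|w\|_\infty e^{\|w\|_\infty}$ (via $|e^x-1|\le |x|e^{|x|}$) and $\|VE_+\|_{\C^1} \le C e^{\|w\|_\infty}\|w\|_{\C^2}$, combined with the $\C^k$-continuity $\|w\|_{\C^k} \le C\|a\|_{\C^k}$ supplied by Propositions \ref{prop:rightinverses}--\ref{prop:holosol}.

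The main obstacle is the mixed-norm estimate for $\|VE_+\|_{\C^1}$, which drives the $\|a\|_{\C^2}$ dependence on the right-hand side of \eqref{eq:Kbound} and requires $\C^2$ regularity of the HIF $w$ (and hence of the right-inverse $R_\perp$) furnished by Proposition \ref{prop:rightinverses}. The fiber-parity bookkeeping, although delicate, is essentially forced by the consistency check that the unattenuated limit $a \to 0$ reduces $K$ to $W^2$, so that the entire formula collapses back to \eqref{eq:FredW}.
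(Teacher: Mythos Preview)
Your proposal is correct and follows essentially the same route as the paper: absorb the attenuation via the odd holomorphic integrating factor, use $\pi_0 X = \pi_0 X_\perp H$ to get $f = -(X_\perp Hv_-)_0$, derive the transport equation for $Hv_-$ via the commutator $[H,X]$, and read off $K = W^2 + WK_{E_+-1} - iK_{E_+-1}$ with the estimate coming from Proposition~\ref{prop:familyops}. The only cosmetic difference is that you apply $[H,X]$ directly to $Xv_- = -E_+ f$ while the paper applies it to $Xv = -fe^{-w}$ and then takes the even part; since $E_+ = \cosh w$ and $K_h = K_{h_+}$ (so the paper's $K_{e^{-w}-1}$ equals your $K_{E_+-1}$), the two computations coincide.
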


\begin{proof} {\bf Proof of \eqref{eq:fNeumann}. }
    Let $w$ a holomorphic, odd solution to $Xw = -a$. If $u$ is the solution to $Xu + au = -f$ with boundary condition $u|_{\partial_- SM} = 0$, then the function $v = e^{-w}u$ solves the transport problem $Xv = -b := -fe^{-w}$ with boundary condition $v|_{\partial_- SM} = 0$, so that $v$ is no other than $u^{fe^{-w}}$. Applying $\pi_0 X = \pi_0 X_\perp H$ (derived in \eqref{eq:ident}) to the equation $Xv = -b$, we obtain
    \begin{align*}
	-f = -\pi_0 b = \pi_0 Xv = (X_\perp Hv)_0 = (X_\perp Hv_-)_0. 
    \end{align*}
    The task now is to make $Hv_-$ more explicit. Hitting the transport equation satisfied by $v$ with the Hilbert transform $H$, we write a transport equation for $Hv$
    \begin{align*}
	X(Hv) = - H b - X_\perp v_0 - (X_\perp v)_0.
    \end{align*}
    As $b$ is holomorphic, we have $Hb = f H(e^{-w}) = -i f (e^{-w}-1)$ and upon taking the even part of the last equation w.r.t. $\theta\mapsto \theta+\pi$, the function $Hv_{-}$ solves the transport equation
    \begin{align*}
	X(H v_-) = i f (\cosh w - 1) - (X_\perp v)_0,
    \end{align*}
    where we have used that $w(\x,\theta+\pi) = -w(\x,\theta)$. This tells us that 
    \begin{align*}
	H v_- = u^{-i f(\cosh w - 1)} + u^{(X_\perp v)_0} + \eta_\psi,
    \end{align*}
    where $\eta_\psi$ is constant along geodesics. We can deduce $\eta$ from the fact that the first two terms in the last r.h.s. vanish on $\partial_- SM$, so that $\eta = (Hv_-|_{\partial_- SM}) \circ \alpha$, and since $v_-$ is known from data on $\partial SM$, so is $\eta$. More precisely, we have, for any $(\x,v)\in \partial SM$
    \begin{align}
	v_-(\x,v) = \frac{1}{2} (e^{-w} I_a f)_- (\x,v) := \left\{ \begin{array}{ll}
	    \frac{1}{2} e^{-w(\x,v)} I_a f(\x,v) & \text{if }(\x,v)\in \partial_+ SM, \\
	    \frac{-1}{2} e^{-w(\x,-v)} I_a f (\x,-v) & \text{if }(\x,v) \in \partial_- SM.		
	    \end{array}
	    \right.
	    \label{eq:vminus}
    \end{align}
    Upon applying the operator $\pi_0 X_\perp$, we obtain 
    \begin{align*}
	f &= -(X_\perp Hv_-)_0 \\
	&= i (X_\perp u^{f(\cosh w - 1)})_0 - W (X_\perp v)_0 - (X_\perp \eta_\psi)_0 \\
	&= i (X_\perp u^{f(\cosh w - 1)})_0 - W (X_\perp u^{fe^{-w}})_0 - (X_\perp \eta_\psi)_0
    \end{align*}
    i.e. this equation takes the form
    \begin{align}
	\begin{split}
	    f + K f &= -(X_\perp \eta_\psi)_0, \qquad \eta = (Hv_-)|_{\partial_- SM}\circ\alpha, \\
	    \text{where } \quad K f &:= W (X_\perp u^{fe^{-w}})_0 - i (X_\perp u^{f(\cosh w - 1)})_0.
	\end{split}    
	\label{eq:frc2}
    \end{align}
    Note that upon rewriting $u^{fe^{-w}} = u^f + u^{f(e^{-w}-1)}$, the operator $K$ can be rewritten as 
    \begin{align*}
	Kf &= W(X_\perp u^f)_0 + W (X_\perp u^{f(e^{-w}-1)})_0 - i(X_\perp u^{f(\cosh w -1)})_0 \\
	&= W^2 f + W (X_\perp u^{f(e^{-w}-1)})_0 - i(X_\perp u^{f(\cosh w -1)})_0.
    \end{align*}
    Equation \eqref{eq:frc2} now corresponds to \eqref{eq:fNeumann} upon noticing that $I_\perp^\star \eta = -2\pi (X_\perp \eta_\psi)_0$, and that 
    \begin{align*}
	I_\perp^\star \eta = I_\perp^\star \left( \frac{1}{2} (H (e^{-w} I_a f)_-)|_{\partial_- SM}\circ\alpha \right) = I_\perp^\star \left( \frac{1}{2} \frac{A_+^\star - A_-^\star}{2} (H (e^{-w} I_a f)_-)\right) = \frac{1}{4} I_\perp^\star A_+^\star H (e^{-w} I_a f)_-,
    \end{align*}
    where the last equality comes from the fact that $A_-^\star$ applied to an odd function on $\partial SM$ makes a function with $\V_+$ symmetry, which in turn is annihilated by $I_\perp^\star$. 
    
    {\bf Proof of \eqref{eq:Kbound}.} The theorem will be proved once we show that the operator $K$ satisfies the bound \eqref{eq:Kbound}. From the last equation, we see that $K = W^2 + W K_{h_1} - i K_{h_2}$, where $h_1 = e^{-w}-1$ and $h_2 = \cosh w - 1$ and the notation $K_{h_j}$ refers to Proposition \ref{prop:familyops}. It is proved in \cite{Krishnan2010} that $\|W\|_{L^2\to L^2} \le C\|\nabla\kappa\|_\infty$ for some constant $C$. By virtue of Proposition \ref{prop:familyops}, we deduce that for $j=1,2$ 
    \[ \|K_{h_j}\|_{L^2 \to L^2} \le C (\|\nabla\kappa\|_\infty \|h_j\|_\infty + \|V h_j\|_{\C^1}).    \]
    Now we bound $\|h_j\|_\infty \le \|w\|_\infty e^{\|w\|_\infty}$ and $\|Vh_j\|_{\C^1} \le C \|Vw\|_{\C^1} e^{\|w\|_\infty} \le C\|w\|_{\C^2} e^{\|w\|_\infty}$ for $j=1,2$, and together with the fact that $w$ is constructed following Prop. \ref{prop:holosol}, it satisfies estimates of the form 
    \begin{align*}
	\|w\|_{\C^k} \le C \|a\|_{\C^k}, \qquad k=0,1,2,\dots
    \end{align*}
    Combining all these estimates together, we obtain estimate \eqref{eq:Kbound}. 
\end{proof}

\begin{remark}
    In the absence of attenuation $a\equiv 0$, equation \eqref{eq:fNeumann} is exactly the Fredholm equation \eqref{eq:FredW}, in which case $w(\x,v)\equiv 0$ so that $Kf = W^2 f$ and the right-hand side of \eqref{eq:frc2} is the post-processing of unattenuated data.
\end{remark}

\begin{corollary}\label{cor:fNeumann}
    Under the hypotheses of Theorem \ref{thm:iterations}, if $\|\nabla\kappa\|_\infty$ and $\|a\|_{\C^2}$ are small enough so that estimate \eqref{eq:Kbound} implies $\|K\|_{L^2\to L^2}<1$, then $f$ can be reconstructed from equation \eqref{eq:fNeumann} via the Neumann series
    \begin{align*}
	f = \frac{1}{2\pi} \sum_{n=0}^\infty (-K)^n I_\perp^\star \eta, \qquad \eta = \frac{1}{4} A_+^\star H (e^{-w} I_a f)_-.
    \end{align*}
\end{corollary}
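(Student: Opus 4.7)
The plan is to apply the standard Neumann-series inversion to the Fredholm equation \eqref{eq:fNeumann} furnished by Theorem \ref{thm:iterations}. Specifically, Theorem \ref{thm:iterations} establishes that any $f \in L^2(M)$ satisfies $(Id + K)f = \frac{1}{2\pi} I_\perp^\star \eta$, with $K:L^2(M)\to L^2(M)$ bounded and $\eta$ given explicitly from the data, together with the operator-norm bound \eqref{eq:Kbound}. So the corollary is really a consequence of the following elementary fact about bounded operators: if $\|K\|_{L^2\to L^2} < 1$, then $Id + K$ is invertible on $L^2(M)$ with $(Id+K)^{-1} = \sum_{n=0}^\infty (-K)^n$, where the series converges absolutely in operator norm.

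First I would invoke the smallness hypothesis: under the assumption that $\|\nabla \kappa\|_\infty$ and $\|a\|_{\C^2}$ are small enough, the right-hand side of \eqref{eq:Kbound} is strictly less than $1$, hence $\|K\|_{L^2\to L^2} < 1$. Next I would note that the right-hand side $\frac{1}{2\pi} I_\perp^\star \eta$ of \eqref{eq:fNeumann} lies in $L^2(M)$: indeed $\eta = \frac{1}{4} A_+^\star H (e^{-w} I_a f)_-$, and each operator in this compound expression is continuous on the appropriate function spaces ($A_\pm$, $A_\pm^\star$ by boundedness of the scattering relation, $H$ as a convolution operator on the fibers, and $I_\perp^\star$ mapping $L^2_\mu(\partial_+ SM)$ into $L^2(M)$ via the representation \eqref{eq:adjoints} and Santal\'o's formula), while $e^{-w}$ is uniformly bounded since $\|w\|_\infty$ is controlled by $\|a\|_\infty$.

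With $(Id + K)$ invertible on $L^2(M)$, I would apply $(Id+K)^{-1}$ to both sides of \eqref{eq:fNeumann} and expand the inverse as the convergent Neumann series, yielding
\begin{align*}
    f = (Id + K)^{-1} \left( \frac{1}{2\pi} I_\perp^\star \eta \right) = \frac{1}{2\pi} \sum_{n=0}^\infty (-K)^n I_\perp^\star \eta.
\end{align*}
This is precisely the formula in the statement. There is no substantial obstacle here: the only nontrivial input is the operator-norm bound \eqref{eq:Kbound} from Theorem \ref{thm:iterations}, which has already been established. The smallness of $\|\nabla\kappa\|_\infty$ and $\|a\|_{\C^2}$ is exactly what is needed to make the prefactor in \eqref{eq:Kbound} strictly less than one and thus trigger the Neumann iteration.
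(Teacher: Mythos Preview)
Your proposal is correct and matches the paper's approach: the paper states the corollary without proof, treating it as an immediate consequence of Theorem~\ref{thm:iterations} via the standard Neumann-series inversion once $\|K\|_{L^2\to L^2}<1$. Your write-up simply makes that routine step explicit.
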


\begin{remark} As in the unattenuated case, this restriction on $\|\nabla\kappa\|_\infty$ and $\|a\|_{\C^2}$ is of rather qualitative nature and does not tell us whether all simple cases will work, and whether this approach would work for attenuations less than $\C^2$. This is to be contrasted with successful numerical reconstructions below, which work for both discontinuous attenuations and cases of metrics arbitrarily close to non-simple.    
\end{remark}

\section{Numerical implementation}\label{sec:numerics}

We now present a brief implementation of an inversion of \eqref{eq:fNeumann} via a Neumann series approach. We use the code developed by the author in \cite{Monard2013} for the unattenuated case, augmented with attenuation. The domain $M$ is the unit disk $\{\x = (x,y), x^2 + y^2\le 1\}$ endowed with the metric $g(\x) = e^{2\lambda(\x)} Id$, where 
\begin{align*}
    \lambda(\x) = 0.2 \left( \exp\left( \frac{(\x - \x_0)^2}{2\sigma^2}\right) - \exp\left( \frac{(\x + \x_0)^2}{2\sigma^2}  \right)  \right), \qquad \x_0 = (0.3,0.3), \qquad \sigma = 0.25,
\end{align*}
describing a region of ``low sound speed'' near $\x_0$ and ``high sound speed'' near $-\x_0$. The effect on geodesic curves can be seen Fig. \ref{fig:geodesics}. For such a domain, it can be computed that the boundary is strictly convex and there are no conjugate points. 

The influx boundary $\partial_+ SM = \Sm^1\times (-\frac{\pi}{2},\frac{\pi}{2})$ is parameterized by $\x(\beta) = \binom{\cos\beta}{\sin\beta}$ and ingoing speed direction $\theta(\beta,\alpha) = \beta + \pi + \alpha$ (in this case, $\beta+\pi$ is the direction of the unit inner normal). $M$ is represented into the unit square $[-1,1]^2$, discretized in an equispaced cartesian fashion with $N = 300$ points along each dimension. 

\begin{figure}[htpb]
    \centering
    \includegraphics[trim= 50 0 60 0, clip, height = 0.195\textheight]{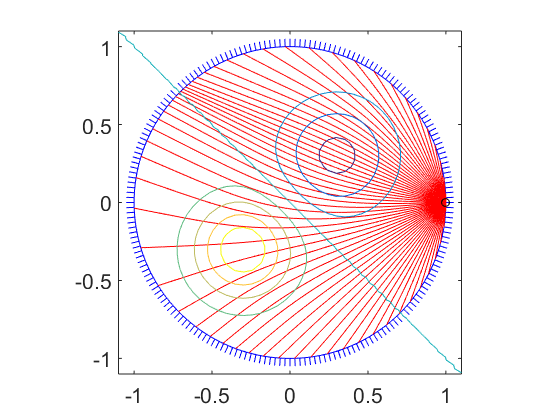}
    \includegraphics[trim= 40 0 60 0, clip, height = 0.195\textheight]{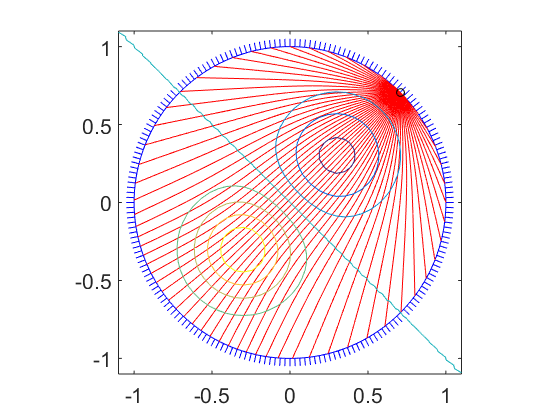}
    \includegraphics[trim= 40 0 60 0, clip, height = 0.195\textheight]{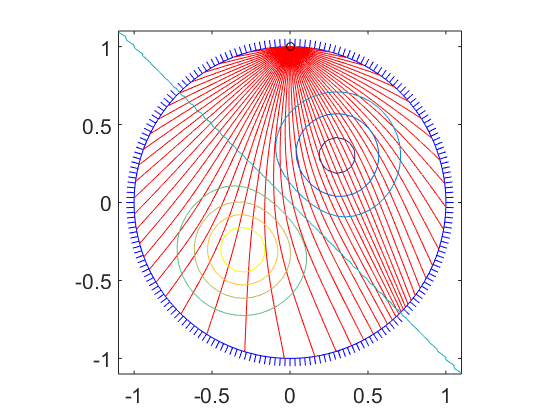}
    \includegraphics[trim= 40 0 40 0, clip, height = 0.195\textheight]{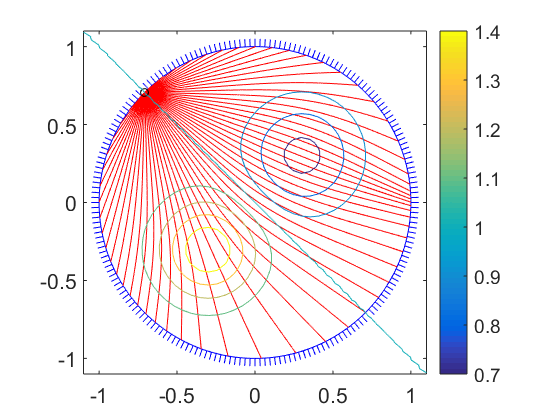}
    \caption{Geodesics cast from the boundary point $\x(\beta)= \binom{\cos\beta}{\sin\beta}$ with, from left to right, $\beta = 0, \frac{\pi}{4}, \frac{\pi}{2}, \frac{3\pi}{4}$. The colorbar on the right indicates the values of the background sound speed $c(\x) = e^{-\lambda(\x)}$.}
    \label{fig:geodesics}
\end{figure}

The function $f$ and the attenuation $a$ are displayed on Fig. \ref{fig:function}. Note that both quantities contain jump singularities. 

\begin{figure}[htpb]
    \centering
    \includegraphics[trim= 30 10 20 10, clip, height = 0.205\textheight]{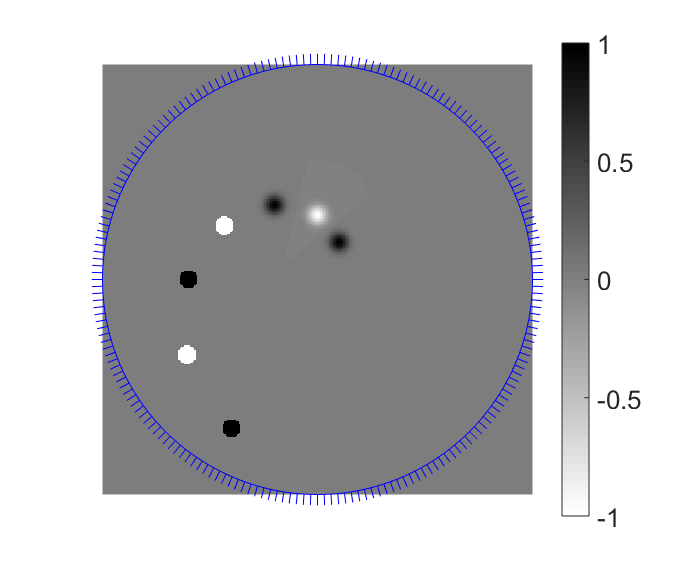}
    \includegraphics[trim= 30 10 20 10, clip, height = 0.205\textheight]{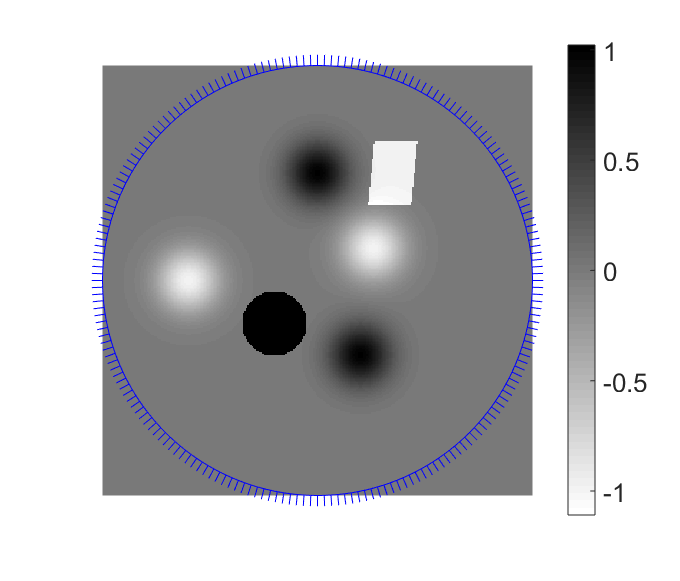}
    \includegraphics[trim= 30 10 20 10, clip, height = 0.205\textheight]{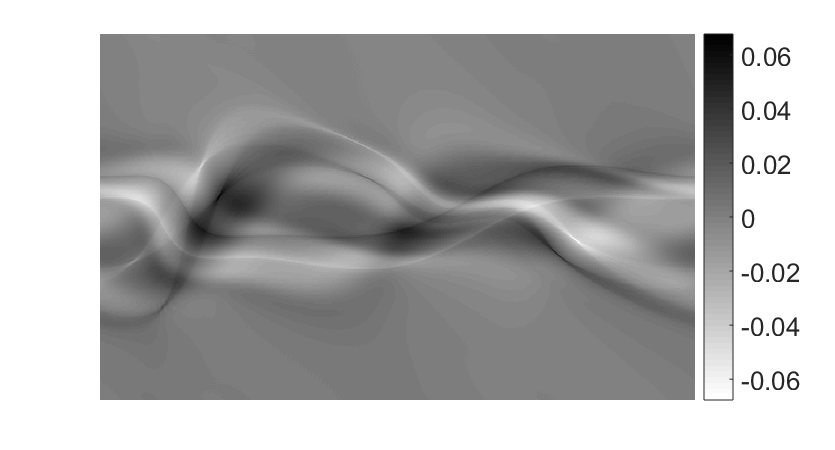}
    \caption{From left to right: Unknown $f$, attenuation $a$, function $n$ defined on $\partial_+ SM$ such that $I_\perp^\star n = a$ (so that $w = 2\pi i(Id + iH) n_\psi$ is a holomorphic solution of $Xw = -a$). Axes for $n$ are $(\beta,\alpha)\in [0,2\pi]\times \left( \frac{-\pi}{2},\frac{\pi}{2}  \right)$.}
    \label{fig:function}
\end{figure}

\paragraph{Strategy for inversion.} Equation \eqref{eq:fNeumann} is of the form
\begin{align}
    f + Kf = L_a I_a f, 
    \label{eq:frc_numerics}
\end{align}
where $I_a f$ represents forward data, $L_a D = \frac{1}{8\pi} I_\perp^\star A_+^\star H (e^{-w} D)_-$ is an approximate inversion and we assume that $K$ is a contraction. Once $I_a$ and $L_a$ are discretized (call their discretized versions with the same name for simplicity), discretizing $K$ separately and computing a finite sum of $\sum_{k=0}^\infty (-K)^k L_a I_a f$ to reconstruct $f$ may introduce additional numerical errors due to the fact that \eqref{eq:frc_numerics} may not be satisfied at the numerical level. A better approach is to set directly $-K = Id - L_a I_a$ and to implement a finite sum of the series 
\begin{align*}
    f = \sum_{k=0}^\infty (Id - L_a I_a)^k L_a I_a f.
\end{align*}
We now briefly explain how the operators $I_a$ and $L_a$ are implemented. 

\paragraph{Forward operator $I_a$.}

The computation of the forward data is done by discretizing the influx boundary $\partial_+ SM$ into an equispaced family $(\beta_i,\alpha_j)_{1\le i\le 2N, 1\le j\le N}$ and for each data point, we compute $I_a f(\beta_i,\alpha_j)$ by discretizing the system of ODEs over $t\in [0,\tau_{ij}]$ where $\tau_{ij} = \tau(\x(\beta_i),\theta(\beta_i,\alpha_j))$
\begin{align*}
    \dot\x = e^{-\lambda(\x)} \binom{\cos\theta}{\sin\theta} , \qquad \dot\theta = e^{-\lambda(\x)} (\partial_y \lambda(\x) \cos\theta - \partial_x\lambda (\x) \sin\theta), 
\end{align*}
with initial conditions $\x(0) = \binom{\cos\beta_i}{\sin\beta_i}$, $\theta(0) = \beta_i + \pi + \alpha_j$, along which we compute 
\[ I_a f (\beta_i, \alpha_j) = \int_0^{\tau_{ij}} f( \x_{\beta_i,\alpha_j}(t)) \exp\left( \int_0^t a(\x_{\beta_i,\alpha_j} (s))\ ds\right)\ dt,\] 
via a discrete sum, with $\x_{\beta_i, \alpha_j}$ the solution of the ODE above. 

\paragraph{Computation of a holomorphic solution $w$ of $Xw = -a$.} Following Proposition \ref{prop:holosol}, we look for $w$ in the form $w = 2\pi i(Id + iH) n_\psi$, where $n$ solves $I_\perp^\star n = a$. This requires implementing $n = R_\perp a$, with $R_\perp = \frac{1}{8\pi} A_+^\star H_- A_- I_0 (Id + W^2)^{-1}$. Following \cite{Monard2013}, $(Id + W^2)^{-1}$ is computed via a few iteration of a rapidly convergent Neumann series, $I_0$ is the unattenuated X-ray transform. In the present case, $n$ is represented on the right-hand plot of Figure \ref{fig:function}. Once $n$ is computed, as the expression of $L_a$ only involves values of $w$ on $\partial SM$, then we can compute
\begin{align*}
    w|_{\partial SM} = (2\pi i (Id + iH) n_\psi) |_{\partial SM} = 2\pi i (Id + iH) (n_\psi) |_{\partial SM}  =  2\pi i (Id + iH) A_+ n,
\end{align*}
where the Hilbert transform is processed via Fast Fourier Transform on the columns of $A_+ n$. As $n$ has $\V_-$ symmetry, $A_+ n$ amounts to extending $n$ to $\partial_- SM$ by oddness w.r.t. $\theta\mapsto \theta+\pi$ (the latter is much more straightforward numerically).

\paragraph{Approximate inverse $L_a$.}
For $D$ a data function defined on a discretization of $\partial_+ SM$, we wish to compute $L_a D = \frac{1}{8\pi} I_\perp^\star A_+^\star H (e^{-w} D)_-$. The computation of $w$ and $H$ is explained in the previous paragraph. $A_+^\star h(\beta,\alpha)$ is computed by combining values of $h(\beta,\alpha)$ and $h$ at the endpoint of the geodesic starting from coordinate $(\beta,\alpha)$. The main technical step is the computation of $I_\perp^\star$, which in isothermal coordinates can be simplified as follows (see \cite[Sec. 3.1.2]{Monard2013}):
\begin{align*}
    I_\perp^\star h(\x) = e^{-2\lambda(\x)} \nabla_\x \cdot \left( e^{\lambda(\cdot)} \int_{\Sm^1} \binom{-\sin\theta}{\cos\theta} h_\psi(\cdot,\theta)\ d\theta \right),
\end{align*}
where $\nabla_\x\cdot$ is just divergence on a cartesian grid. Integrals in $\Sm^1$ can then be discretized using finite sums and the divergence is implemented using finite differences.

Both computations of $A_+^\star$ and $I_\perp^\star$ require computing several geodesic endpoints, which is the main bottleneck of the code. An alternative option, trading memory for much shorter CPU time, is to compute and store all endpoints required at first, and reusing them in further Neumann iterations.

\begin{figure}[htpb]
    \centering
    \includegraphics[trim= 30 20 10 20, clip, height = 0.21\textheight]{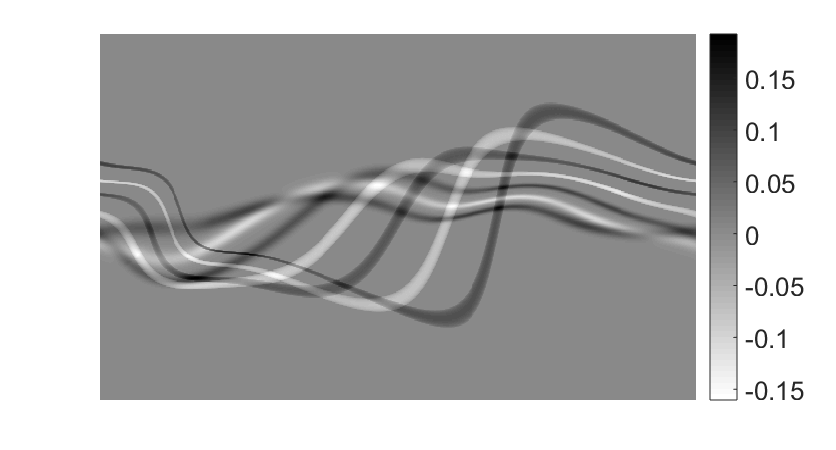}
    \includegraphics[trim= 30 20 10 20, clip, height = 0.21\textheight]{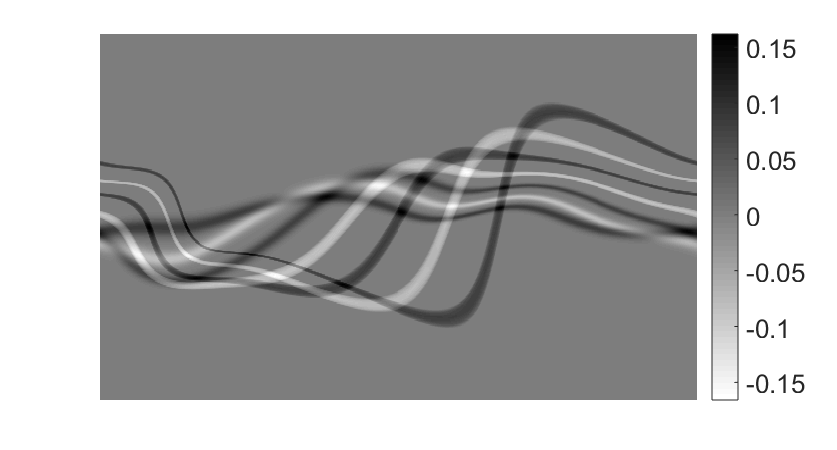}
    \caption{Left: attenuated data $I_a f$ (data for Experiment 1) with $f,a$ from Fig. \ref{fig:function}. Right: unattenuated data $I_0 f$ for comparison. The differences may be observed in magnitude but not in support. }
    \label{fig:data}
\end{figure}


\paragraph{Experiments.} We present two experiments, in which $a$ and $f$ refer to the functions displayed on Figure \ref{fig:function}. $5a$ refers to the function $M\ni\x\mapsto 5 a(\x)$.

\begin{description}
    \item[$\bullet$ Experiment 1.] (low attenuation) Neumann series based reconstruction of $f$ from $I_a f$. 
    \item[$\bullet$ Experiment 2.] (high attenuation) Neumann series based reconstruction of $f$ from $I_{5a} f$. 
\end{description}
Experiment 1 successfully and stably reconstructs $f$ within 3 Neumann iterations, as shown in Fig. \ref{fig:errorExp1} (up to numerical inaccuracies, and given the fact that jumps can never be fully captured exactly). As this example works even if $a$ is discontinuous, this is to be contrasted with the regularity requirements on $a$ from Theorem \ref{thm:iterations}.

\begin{figure}[htpb]
    \centering
    \includegraphics[trim= 30 30 10 50, clip, height = 0.21\textheight]{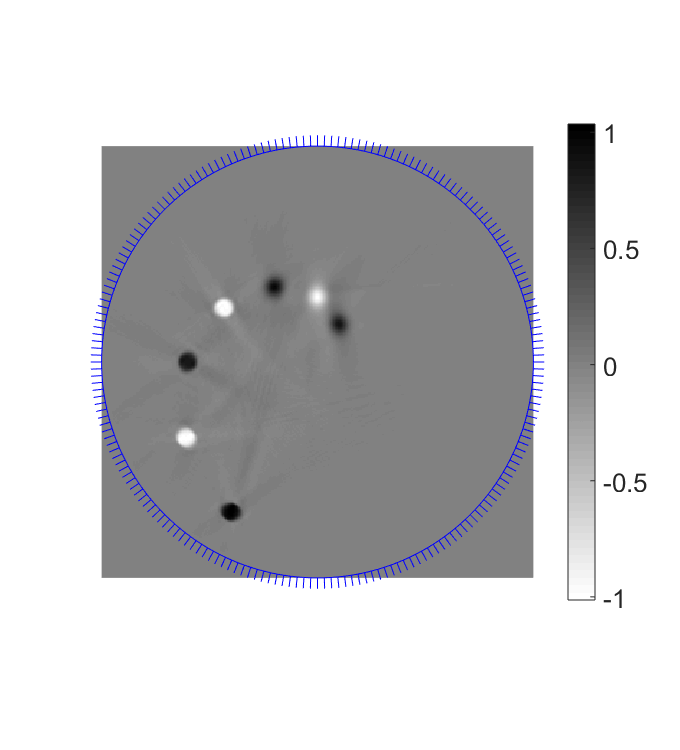}
    \includegraphics[trim= 30 30 10 50, clip, height = 0.21\textheight]{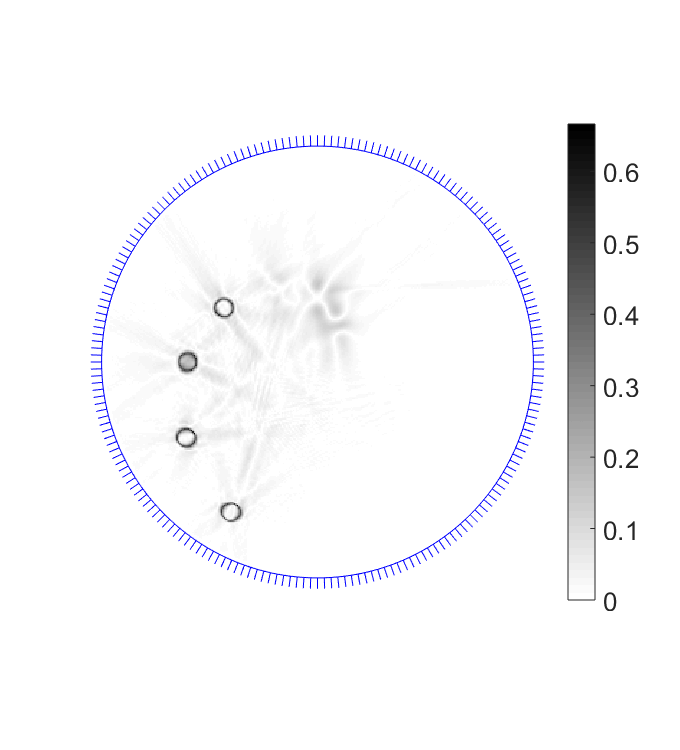}
    \includegraphics[trim= 30 30 10 50, clip, height = 0.21\textheight]{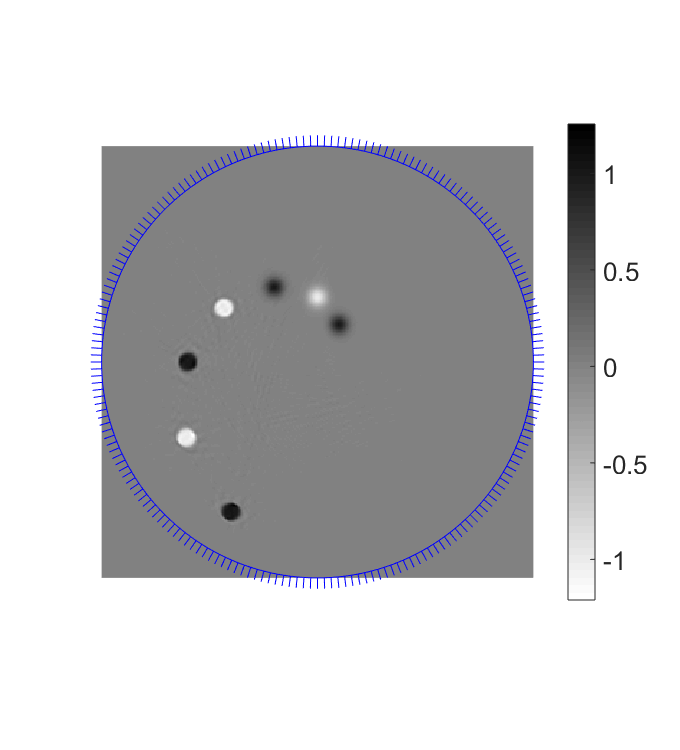}
    \includegraphics[trim= 30 30 10 50, clip, height = 0.21\textheight]{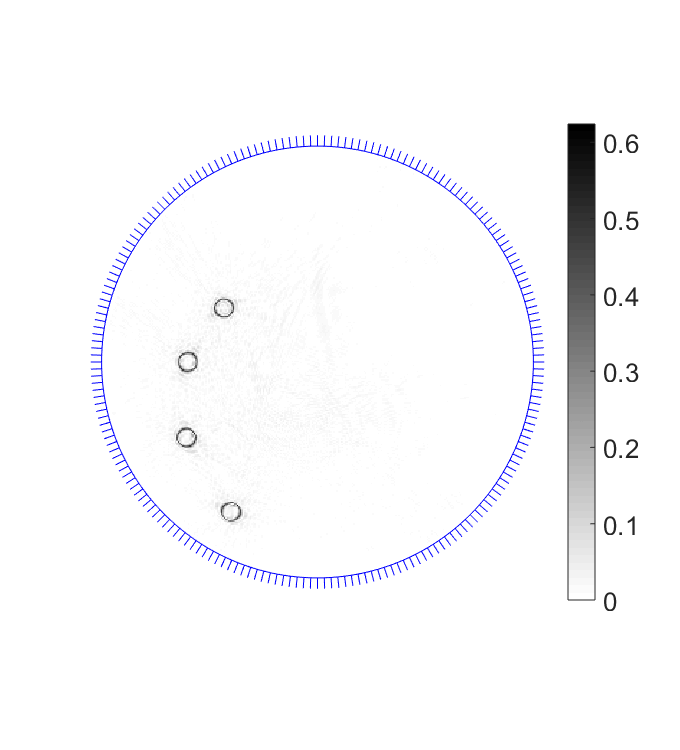}
    \caption{From left to right: reconstruction $f_{rc}$ from $I_a f$ and pointwise error $|f_{rc}-f|$ after one iteration, $f_{rc}$ from $I_a f$ and pointwise error $|f_{rc}-f|$ after three iterations.}
    \label{fig:errorExp1}
\end{figure}

Experiment 2 displays a divergent Neumann series, due to the fact that attenuation $5a$ is too high. This is in agreement with the smallness requirements of Corollary \ref{cor:fNeumann} on the attenuation coefficient, as the operator norm of the error operator in \eqref{eq:Kbound} potentially grows like $e^{\|a\|_\infty}$. 

\begin{figure}[htpb]
    \centering
    \includegraphics[trim= 60 20 10 20, clip, height = 0.21\textheight]{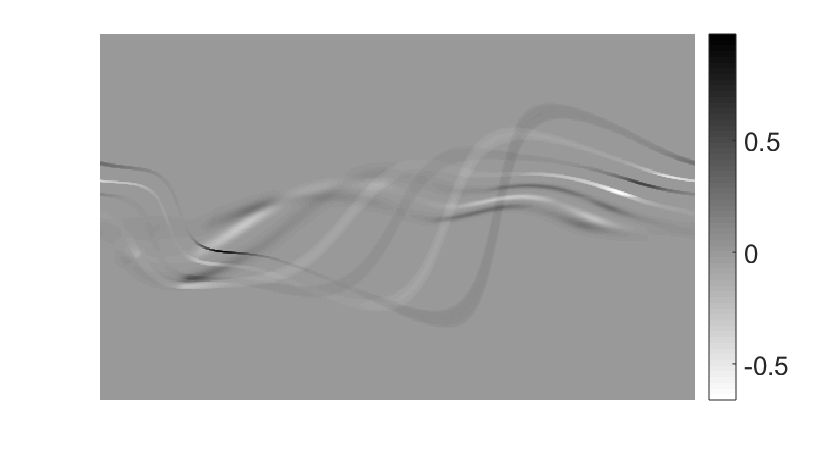}
    \includegraphics[trim= 30 40 20 60, clip, height = 0.21\textheight]{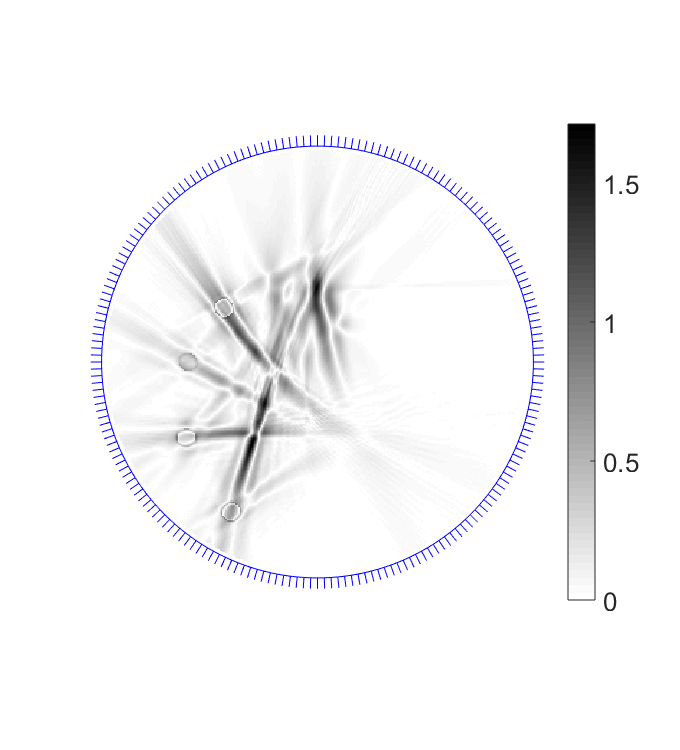}
    \includegraphics[trim= 30 40 20 60, clip, height = 0.21\textheight]{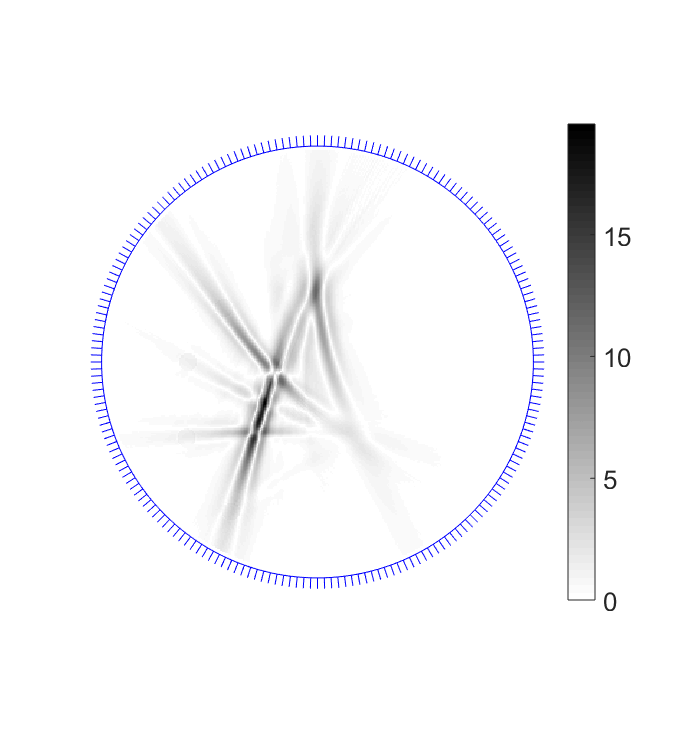}
    \caption{From left to right: $I_{5a} f$ (data for Experiment 2), pointwise error $|f_{rc}-f|$ after one iteration, pointwise error $|f_{rc}-f|$ after two iterations. Divergence of the algorithm ensues due to the appearing unstable modes.}
    \label{fig:Exp2}
\end{figure}

\section*{Acknowledgements}

The author thanks Gunther Uhlmann for encouragement and support, Hart Smith, Colin Guillarmou and Plamen Stefanov for helpful discussions, Larry Pierce for communicating references \cite{Nguyen2014,Manjappa2015}, as well as the anonymous referees for valuable comments. Partial funding by NSF grant No. 1265958 is acknowledged.

\appendix

\section{A certain family of operators - proof of Proposition \ref{prop:familyops}}\label{sec:familyops}

On simple surfaces, it is proved in \cite{Pestov2004} that the operator $Wf = (X_\perp u^f)_0$ is smoothing on simple surfaces, so that the equation reconstructing a function from its unattenuated ray transform is of Fredholm type.

It has been observed that if $f$ is now a function on $SM$, the corresponding operator no longer has such properties. However, the operators $K_h$ introduced in Proposition \ref{prop:familyops} appear naturally as error operators in Theorem \ref{thm:iterations}, and they generalize $W$ since $W = K_h$ with $h\equiv 1$. In general, $K_h$ may no longer be smoothing, but we can still obtain $L^2(M)\to L^2(M)$ continuity with estimates on the norm in terms of $h$ and the ambient curvature. We must recall some facts about Jacobi fields (or variations of the exponential map), following \cite{Merry2011}. For $\xi\in T_{(\x,v)} SM$, we may decompose $d\varphi_t(\xi)$ along the frame $\{X(t),X_\perp(t), V(t)\}$ at the basepoint $\varphi_t(\x,v)$ as
\begin{align*}
  d\varphi_t(\xi) = \zeta_1 (\x,v,t) X(t) + \zeta_2 (\x,v,t) X_\perp (t) + \zeta_3 (\x,v,t) V(t). 
\end{align*}
The structure equations then provide us a differential system in $t$ for the coefficients $\zeta_i$:
\begin{align*}
    \dot \zeta_1 = 0, \qquad \dot \zeta_2 + \zeta_3 = 0, \qquad \dot \zeta_3 - \kappa(\gamma_{\x,v}(t)) \zeta_2 = 0.
\end{align*}
In particular, we may express the variation fields $d\varphi_t(X_\perp) = a X_\perp(t) - \dot a V(t)$ and $d\varphi_t(V) = -b X_\perp(t) + \dot b V(t)$ in term of two functions $a$, $b$ defined on $\D$, solving
\begin{align*}
    \ddot a + \kappa(\gamma_{\x,v}(t)) a = \ddot b + \kappa(\gamma_{\x,v}(t)) b = 0, \qquad \left[
    \begin{smallmatrix}
	 a & b \\ \dot a & \dot b
    \end{smallmatrix}
\right](0) = \left[
\begin{smallmatrix}
    1 & 0 \\ 0 & 1
\end{smallmatrix}
\right].
\end{align*} 
The assumption of simplicity implies that $b$ does not vanish outside $\{t=0\}$ and is thus positive for all $t>0$. Note the constancy of the Wronskian $a\dot b - b \dot a \equiv 1$. 

\begin{proof}[Proof of Proposition \ref{prop:familyops}] For a smooth function $\phi(\x,v)$ on $SM$ vanishing on $\partial SM$, we first write, using the chain rule,
    \begin{align*}
	X_\perp \int_0^{\tau(\x,v)} \phi (\varphi_t(\x,v))\ dt &= \int_0^{\tau(\x,v)} (a(\x,v,t) X_\perp \phi(\varphi_t(\x,v))  - \dot a (\x,v,t) V\phi(\varphi_t(\x,v)) )\ dt.
    \end{align*}
    We then rewrite (keeping $(\x,v)$ implicit), for $t\ne 0$: 
    \begin{align*}
	a X_\perp \phi - \dot a V \phi = \frac{a}{b} (b X_\perp \phi - \dot b V\phi) + \left( -\dot a + \frac{a \dot b}{b} \right) V\phi = - \frac{a}{b} V(\phi \circ \varphi_t) + \frac{1}{b} V\phi. 
    \end{align*}
    Integrating this equality for $t\in (0,\tau(\x,v))$ and integrating by parts over $S_\x$ (using that $\phi$ vanishes at $\partial SM$), we arrive at the conclusion that
    \begin{align*}
	\int_{S_\x} X_\perp \int_0^{\tau(\x,v)} \phi (\varphi_t(\x,v))\ dt\ dS(v) = \int_{\Sm^\x} \int_0^{\tau(\x,v)} \left( V \left( \frac{a}{b} \right) \phi + \frac{1}{b} V\phi \right)\ dt\ dS(v).
    \end{align*}


Now replacing $\phi(\x,v)$ by $f(\x) h(\x,v)$ and using the fact that $V(fh) = fV(h)$, we arrive at 
\begin{align*}
    K_h(f)(\x) &= \frac{1}{2\pi} \int_{S_\x} \int_0^{\tau(\x,v)} \left( V \left( \frac{a}{b} \right) h + \frac{1}{b} Vh \right) f (\gamma_{\x,v}(t))\ dt\ dS(v) \\
    &= K_{h,1} f(\x) + K_{h,2} f(\x),
\end{align*}
upon expanding the sum. The operator $K_{h,1}$ is just as well-behaved as the operator $W$ and for the same reason: defining $q(\x,v,t) := \frac{1}{b (\x,v,t)} V\left( \frac{a(\x,v,t)}{b(\x,v,t)}\right)$, it is shown in \cite{Krishnan2010} that $|q(\x,v,t)| \le C \|\nabla \kappa\|_\infty$ for every $(\x,v,t)\in \D$. So we can rewrite 
\begin{align*}
    K_{h,1}f(\x) &= \frac{1}{2\pi} \int_{S_\x} \int_0^{\tau(\x,v)} q(\x,v,t) h(\varphi_t(\x,v)) f(\gamma_{\x,v}(t))\ b(\x,v,t)\ dt\ dS(v) \\
    &= \frac{1}{2\pi} \int_{M} q(\x,v(\y),t(\y)) h(\varphi_{t(\y)}(\x,v(\y)) f(\y)\ dM_\y,
\end{align*}
so that the kernel $k_{h,1}(\x,\y) = q(\x,v(\y),t(\y)) h(\varphi_{t(\y)}(\x,v(\y))$ of $K_{h,1}$ is bounded (hence in $L^2(M\times M)$), i.e. the operator $K_{h,1}:L^2(M)\to L^2(M)$ is continuous (in fact, compact) with an operator norm controlled by $C\|\nabla \kappa\|_\infty \|h\|_\infty$. On to the study of the second term
\begin{align*}
    K_{h,2} f(\x) = \frac{1}{2\pi} \int_{S_\x} \int_0^{\tau(\x,v)} \frac{q_2(\x,v,t)}{b^2(\x,v,t)} f(\gamma_{\x,v}(t))\ b\ dt\ dS(v), \qquad q_2(\x,v,t) := Vh(\varphi_t(\x,v)).
\end{align*}
The function $q_2$ satisfies $\int_{S_\x} q_2(\x,v,0)\ dS(v) = \int_{S_\x} Vh(\x,v)\ dS(v) = 0$, so that the integral is expected to make sense as a principal value integral. 
Note that near $t=0$, we have $b(\x,v,t) = t +  t^3 c(\x,v,t)$ where $c$ is smooth on $\D$, and $1 +  t^2 c(\x,v,t)$ does not vanish on $\D$ since $b$ does not vanish outside $\{t=0\}$ by simplicity of the surface. More precisely, we write
\begin{align*}
    2\pi K_{h,2} f(\x) = \int_{S_\x} \int_0^{\tau(\x,v)} \frac{q_2(\x,v,t)}{b^2(\x,v,t)} f(\gamma_{\x,v}(t))\ b \ dt\ dS(v) = Af(\x) + Bf(\x) + Cf(\x)
\end{align*}
where, upon writing $b(\x,v,t) = t + t^3 c(\x,v,t)$, we define  
\begin{align*}
    Af(\x) &= \int_{S_\x} \int_0^{\tau(\x,v)} \frac{q_2(\x,v,0)}{t^2} f(\gamma_{\x,v}(t))\ b \ dt\ dS(v) \\
    Bf(\x) &= \int_{S_\x} \int_0^{\tau(\x,v)} \frac{q_2(\x,v,t)-q_2(\x,v,0)}{t^2} f(\gamma_{\x,v}(t))\ b\ dt\ dS(v) \\
    Cf(\x) &= - \int_{S_\x} \int_0^{\tau(\x,v)} q_2(\x,v,t) \frac{c(\x,v,t)(2+t^2c(\x,v,t))}{(1+t^2c(\x,v,t))^2} f(\gamma_{\x,v}(t))\ b\ dt\ dS(v). 
\end{align*}
Upon changing variable $(v,t)\mapsto \y(v,t) = \gamma_{\x,v}(t)$ (with Jacobian $dM_\y = b\ dt\ dS(v)$), the $C$ term becomes an operator with bounded kernel, i.e. $L^2\to L^2$ bounded, with operator norm controlled by $\|q_2\|_\infty$, i.e. $\|Vh\|_{\infty}$. On to the $B$ term, we may write $|q_2(\x,v,t)-q_2(\x,v,0)| \le C t \|Vh\|_{\C^1}$ uniformly on $\D$, so that the kernel of $B$ has an integrable singularity and the $B$ term becomes an operator with integrable kernel, i.e. $L^2\to L^2$ bounded, with operator norm controlled by $\|Vh\|_{\C^1}$. On to the $A$ term, we assume without loss of generality to be working in isothermal coordinates. We change variable $(v,t)\mapsto \y(v,t) = \gamma_{\x,v}(t)$ to make appear
\begin{align*}
    Af(\x) = \int_{M} \frac{q_2(\x,v(\y),0)}{(d_g(\x,\y))^2} f(\y)\ dM_\y = \int_{M} \frac{q_2(\x,v(\y),0)}{(d_g(\x,\y))^2} e^{2\lambda(\y)} f(\y) \ d\y,
\end{align*}
where $d\y$ now represents the Lebesgue measure on $\Rm^2$. Expansions near $\x$ give that 
\begin{align*}
    d_g(\x,\y) &= e^{\lambda(\y)} |\x-\y| + |\x-\y|^2 d_1(\x,\y), \\
    \hat\theta(\y) &= \frac{\x-\y}{|\x-\y|} + |\x-\y| \hat\theta_1(\x,\y), 
\end{align*} 
this allows to rewrite $A$ as a Calder\'on-Zygmund operator of the form 
\begin{align}
    Af(\x) = \int_M \frac{q_2(\x,\hat\theta,0)}{|\x-\y|^2} f(\y)\ d\y + \int_M \frac{q_3(\x,\y)}{|\x-\y|}f(\y)\ d\y, \qquad \hat\theta := \frac{\x-\y}{|\x-\y|},
    \label{eq:Aterm}
\end{align}
where $q_3$ is uniformly bounded by $C \|Vh\|_{\C^1}$. By virtue of \cite[Theorem XI.3.1]{Mikhlin1980}, the first term together with the zero mean value condition $\int_{\Sm^1} q_2(\x,\hat\theta,0)\ d\theta = 0$ is an operator $L^2\to L^2$ continuous, with an operator norm bounded by $C\sup_{\x\in M} \|q_2(\x,\cdot)\|_{L^2(\Sm^1)}$, in turn bounded by $C\|V h\|_{\infty}$. The second term of \eqref{eq:Aterm} is another weakly singular operator whose operator norm can be bounded by $C\|Vh\|_{\C^1}$ as well.
\end{proof}

\bibliographystyle{siam}
\bibliography{../bibliography/bibliography}

\end{document}